\newcounter{parag}
\newcommand\la{\langle}
\newcommand\ra{\rangle}
\newtheorem{lem}{Lemma}
\newtheorem{theorem}{Theorem}
\newtheorem{defi}{Definition}
\newtheorem{cor}{Corollary}
\newtheorem{rem}{Remark}
\title{On groups whose conjugacy class sizes are not divisible by each other}
\author{ Yang Nanying \footnote{School of Science, Jiangnan University, Wuxi, 214122, China},
	Ilya Gorshkov\footnote{Sobolev Institute of Mathematics, Novosibirsk, 630090, Russia} \footnote{Siberian Federal University, Krasnoyarsk, 660041, Russia}}
\date{\vspace{-30px}}
\begin{document}
	\maketitle
	%\begin{center}
	%	{\bf \Large Characterization of finite groups by conjugate graph}
	
	%	\medskip
	
	%	{\bf Ilya Gorshkov}
	%	\footnote{ Ilya Gorshkov
		%		ilygor8@gmail.com
		%		
		%		Sobolev Institute of Mathematics SB RAS, Novosibirsk, Russia
		%		
		%		Novosibirsk State Technical University, Novosibirsk, Russia.}
	%	\footnote{The work is supported by  Russian Science Foundation (project
		%		).}
	%\end{center}
	
	{\it
		Abstract: Let $G$ be a finite group and $N(G)$ be the set of its conjugacy class sizes excluding~$1$.
		Let us define a directed graph $\Gamma(G)$, the set of vertices of this graph is $N(G)$ and the vertices $x$ and $y$ are connected by a directed edge from $x$ to $y$ if $x$ divides $y$ and $N(G)$ does not contain a number $z$ different from $x$ and $y$ such that $x$ divides $z$ and $z$ divides $y$. We will call the graph $\Gamma(G)$ the conjugate graph of the group $G$. In this work, we will study finite groups whose conjugate graph is a set of points.
		
		\smallskip
		
		Keywords: finite group, conjugacy classes, conjugate graph. \smallskip
		
		2020 Mathematics subject classification: 20E45, 20D60.
		
	}
	\section*{Introduction}
	Over the years, considerable work has been done to establish relations between
	the structure of a finite group and its set of sizes of conjugacy classes.
	
	We say that a group $G$ has conjugate rank $n$ (shortly rank or $rank(G)$) if $|N(G)|=n$.
	Noboru Ito laid the foundation for the study of $F$-groups in his famous paper \cite{Ito}. A finite group $G$ is $F$-group if $x, y \in G \setminus Z(G)$ and $ C_G(x)\leq C_G(y)$ implies that
	$C_G(x)=C_G(y)$. An important subclass of $F$-groups is the class of rank $1$ groups, i.e. $I$-group is a group whose set of conjugacy classes sizes is $\{1,n\}$. Ito proved that rank $1$ groups are nilpotent, in particular $n$ is a prime power. Later, Kenta Ishikawa showed \cite{Ish} that rank $1$ groups are of class at most $3$. In \cite{TRM} rank $1$ groups when $p\neq 2$ were described.
	
	Johen Rebmann \cite{Reb} proved a classification theorem describing $F$-groups. He determined their structure, up to
	$F$-groups which are central extensions of groups of prime-power order.
	
	One more subclass of $F$-groups is the class of $CA$-groups. Finite group $G$ is a $CA$-group if all centralizers of noncentral elements are abelian. The $CA$-groups were investigated by Roland Schmidt \cite{Shmidt}. He determined their structure up to $CA$-groups which are central extensions of groups of prime-power order. 
	
	Silvio Dolfi, Marcel Herzog and Enrico Jabara \cite{DHJ} studied $CH$ groups, consisting of finite groups in which noncentral commuting elements have centralizers of the same order. In particular, the following inclusion was proved 
	
	$$CA\subset CH\subset F.$$
	
	Given $\Theta\subseteq \mathbb{N}, |\Theta|<\infty$, define the directed graph $\Gamma(\Theta)$, with the vertex set $\Theta$ and edges $\overrightarrow{ab}$ whenever $a$ divides $b$ and $\Theta$ does not contain a number $c$ such that $a$ divides $c$ and $c$ divides $b$. In \cite{Gorthom1}, the conjugate graph $\Gamma(G)$ was defined for a finite group $G$. Set $\Gamma(G)=\Gamma(N(G)\setminus\{1\})$. We will say that $G$ is an $SP$-group if $\Gamma(G)$ does not contain edges. Note that the definition of $SP$-groups differs significantly from the definition of $CA$- and $CH$-groups. In this case, only the arithmetic properties of the group are used.
	
	The main goal of this manuscript is to describe groups with the $SP$ property. 
	In particular, we will prove the inclusion $SP\subset CH$. 
	
	\begin{theorem}\label{primeiro}
		$SP\subset CH.$
	\end{theorem}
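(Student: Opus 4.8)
The first step is to unwind the hypothesis. An edge of $\Gamma(G)$ is exactly a covering relation of the divisibility order on $N(G)$, so $\Gamma(G)$ has no edges precisely when $N(G)$ is an antichain: no class size larger than $1$ properly divides another. From this I extract the basic rigidity principle that I will use repeatedly. If $u,v$ are noncentral and $C_G(u)\le C_G(v)$, then $|C_G(u)|$ divides $|C_G(v)|$, hence the class size $|G:C_G(v)|$ divides $|G:C_G(u)|$; since both lie in $N(G)$ and $G$ is an $SP$-group, they coincide, forcing $|C_G(u)|=|C_G(v)|$ and therefore $C_G(u)=C_G(v)$. In particular this already gives $SP\subseteq F$. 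A useful special case: for any noncentral $g$ and any prime $p$ for which the $p$-part $g_p$ is noncentral, we have $C_G(g)\le C_G(g_p)$, whence $C_G(g)=C_G(g_p)$.

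Now let $x,y$ be commuting noncentral elements; the goal is $|C_G(x)|=|C_G(y)|$. Write $P_x=\{p: x_p\notin Z(G)\}$ and $P_y=\{q:y_q\notin Z(G)\}$, both nonempty. Suppose first that we may choose $p\in P_x$ and $q\in P_y$ with $p\neq q$. Then $x_p$ and $y_q$ commute and have coprime prime-power orders, so $\langle x_p,y_q\rangle=\langle x_py_q\rangle$ is cyclic; since $x_p,y_q\in\langle x_py_q\rangle$ we have $C_G(x_py_q)=C_G(x_p)\cap C_G(y_q)\le C_G(x_p)$, and the rigidity principle applied to the noncentral pair $x_p,x_py_q$ gives $C_G(x_p)=C_G(x_py_q)$, and symmetrically $C_G(y_q)=C_G(x_py_q)$. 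Combined with $C_G(x)=C_G(x_p)$ and $C_G(y)=C_G(y_q)$ this yields the stronger conclusion $C_G(x)=C_G(y)$. The same cyclic-overgroup argument disposes of any prime $p\in P_x\cap P_y$ for which $\langle x_p,y_p\rangle$ is cyclic, and indeed of every case in which $\langle x,y\rangle$ is itself cyclic.

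The only configuration left is $P_x=P_y=\{p\}$ with $\langle x_p,y_p\rangle$ noncyclic; replacing $x,y$ by $x_p,y_p$ we are reduced to commuting noncentral $p$-elements $a,b$ generating a noncyclic abelian $p$-group, where the task is to prove $|C_G(a)|=|C_G(b)|$. Here the cyclic bridge is unavailable: in $V=\langle a,b\rangle$ the elements lying in a common cyclic subgroup are mutual powers, so distinct maximal cyclic subgroups (``lines'') of $V$ cannot be linked by the previous argument. The plan is to analyze $C'=C_G(V)=C_G(a)\cap C_G(b)$, to observe that $V\le Z(C')$ and $C_G(Z(C'))=C'$, and to let $N_G(V)$ act on the lines of $V$: two lines in a common $N_G(V)$-orbit have $G$-conjugate generators, hence equal class sizes, so it suffices to rule out two noncentral lines whose class sizes are incomparable under divisibility.

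I expect this last point to be the main obstacle, since nothing established so far forces comparability across different orbits, and the $SP$ condition constrains only divisibilities that actually occur. The two natural ways to finish are either to prove, combining $SP$ with $p$-local (Burnside-type) control of fusion of elements of $V$, that $N_G(V)$ must act transitively on the noncentral lines, so that all such lines share one class size and $|C_G(a)|=|C_G(b)|$ follows; or to invoke $SP\subseteq F$ together with Rebmann's classification of $F$-groups and verify the $CH$ property by inspection on the resulting short list of structures. Either route reduces the theorem to the single remaining noncyclic $p$-case isolated above.
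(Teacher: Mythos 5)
Your rigidity principle and your treatment of the distinct-primes case are correct and essentially identical to the paper's argument: pick noncentral primary components $z$ of $x$ and $t$ of $y$, note $C_G(x)=C_G(z)$ and $C_G(y)=C_G(t)$ by rigidity, and when $z$ and $t$ have coprime orders bridge the two centralizers through the product $zt$. But your proposal stops exactly where the theorem still needs proving: the case where the only noncentral primary components of $x$ and $y$ are $p$-elements for one and the same prime $p$. What you offer there --- transitivity of $N_G(V)$ on the noncentral ``lines'' of $V=\langle a,b\rangle$, or an appeal to Rebmann's classification of $F$-groups --- is not carried out, and the first route is not promising: nothing in the $SP$ hypothesis controls fusion inside $V$, and comparability of class sizes across different $N_G(V)$-orbits is precisely the issue to be resolved, not something fusion hands you.

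The missing idea is that the bridge element need not lie in $V$ at all: one should look at $p'$-elements of the two centralizers. The paper argues as follows. If $C_G(z)$ (or symmetrically $C_G(t)$) contains a noncentral $p'$-element $h$, then rigidity applied to $hz$ gives $C_G(h)=C_G(z)$; since $t\in C_G(z)=C_G(h)$, the element $th$ is a product of commuting elements of coprime order, and rigidity applied to it gives $C_G(t)=C_G(th)=C_G(h)=C_G(z)$, finishing this case. Otherwise, every $p'$-element of $C_G(z)$ and of $C_G(t)$ is central in $G$; then every Sylow $q$-subgroup of either centralizer, $q\ne p$, lies in $Z$, so $|C_G(z)|_{p'}=|C_G(t)|_{p'}=|Z|_{p'}$. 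Consequently the class sizes $|G:C_G(z)|$ and $|G:C_G(t)|$ have the same $p'$-part, i.e.\ they differ by a power of $p$; in particular one divides the other, and the $SP$ hypothesis forces them to be equal, giving $|C_G(x)|=|C_G(y)|$. This purely arithmetic step --- equal $p'$-parts of the centralizers make the two class sizes comparable under divisibility, which is exactly the situation $SP$ can exploit --- is what your proposal lacks, and without it the theorem is not proved.
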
	
	
	Using this result and classification of $CH$-groups we obtain a classification of $SP$-groups.
	
	\begin{theorem}\label{segundo}
		A group $G$ is $SP$-group if and only if it is of one of the following types.
		\begin{enumerate}
			\item[(I)]  $G=T\times P$ where $T$ is abelian and $P$ is a $p$-group for some prime $p$, $rank(P)=1$.
			
			\item[(II)] $G/Z$ is a Frobenius group with Frobenius kernel $K/Z$ and Frobenius
			complement $L/Z$, where $K$ and $L$ are abelian.
			
			\item[(III)] $G/Z$ is a Frobenius group with Frobenius kernel $K/Z$ and Frobenius
			complement $L/Z$, such that $K=PZ$, where $rank(P)=1$ and $P$ is a normal Sylow $p$-subgroup of
			$G$ for some $p\in\pi(G)$, $Z(P)=Z\cap P$ and $L$ is abelian.
			
			\item[(IV)] $G/Z\simeq PSL(2, p^n)$ or $PGL(2, p^n)$ and $G'\simeq SL(2, p^n)$, where $p$ is a prime and $p^n>3$.
			
			\item[(V)] $G/Z\simeq PSL(2, 9)$ or $PGL(2, 9)$ and $G'$ is isomorphic to the Schur cover of
			$PSL(2, 9)$.
			
		\end{enumerate}
		
	\end{theorem}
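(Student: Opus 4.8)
The plan is to combine Theorem~\ref{primeiro} with the classification of $CH$-groups due to Dolfi, Herzog and Jabara \cite{DHJ}, and then to sieve the resulting list by the purely arithmetic $SP$-condition. By Theorem~\ref{primeiro} every $SP$-group is a $CH$-group, so $G$ must appear in the DHJ list; conversely I must verify that each of the types (I)--(V) genuinely satisfies the antichain condition on $N(G)$. Thus the proof splits into a forward direction (restricting the $CH$-list) and a converse (checking each surviving type is $SP$). The only extra input beyond the structural $CH$-classification is the observation that $G$ is an $SP$-group precisely when no element of $N(G)$ divides another, i.e. when $N(G)$ is an antichain under divisibility.

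For the forward direction I would take each structural type produced by \cite{DHJ} and compute $N(G)$. The nilpotent (central) $CH$-groups have the shape $G=T\times P$ with $T$ abelian and $P$ a nonabelian $p$-group; here $N(G)=N(P)$, and the antichain condition forces $|N(P)|\le 1$, i.e. $rank(P)=1$, which gives type (I) (with $P$ trivial recovering the abelian case). For the Frobenius types, write $G/Z$ as a Frobenius group with kernel $K/Z$ and complement $L/Z$. Using the fixed-point-free action one reads off the centralizers: a noncentral kernel element has class size $|L/Z|$ and a noncentral complement element has class size $|K/Z|$, and these two integers are coprime; hence the only possible divisibilities arise from elements whose image lies in a nonabelian Sylow subgroup of $K$ or $L$. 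Imposing ``no divisibility'' then forces $K$ and $L$ to be abelian, except that $K$ may contain a single rank-$1$ normal Sylow $p$-subgroup $P$ with $Z(P)=Z\cap P$; this produces types (II) and (III), where the coprimality $\gcd(|K/Z|,|L/Z|)=1$ guarantees the antichain property.

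The main work, and the principal obstacle, is the almost-simple family. For the $CH$-groups with $G/Z\cong \mathrm{PSL}(2,p^n)$ or $\mathrm{PGL}(2,p^n)$ I would use the partition of $SL(2,q)$, $q=p^n$, into central, unipotent, split-torus and nonsplit-torus elements, whose nontrivial class sizes are (up to the usual factors of $2$) of the form $\tfrac{q^2-1}{2}$, $q(q+1)$ and $q(q-1)$. The antichain condition then reduces to elementary divisibility checks: in particular $q(q-1)\mid q(q+1)$ holds iff $(q-1)\mid 2$, which is exactly how the bound $p^n>3$ emerges, and one verifies that for $p^n>3$ none of these sizes divides another. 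This also forces $G'\cong SL(2,p^n)$ as the relevant quasisimple shape, yielding type (IV). The exceptional case is $\mathrm{PSL}(2,9)\cong A_6$, whose Schur multiplier has order $6$ rather than $2$: the admissible central extensions are then larger, and a separate class-size computation on the Schur cover is needed to confirm that the antichain condition persists, producing the isolated type (V). The delicate points throughout are the factor-of-two discrepancies between $SL$, $PSL$ and $PGL$ (and between $q$ even and $q$ odd), and making sure that no small-$q$ coincidence reintroduces an edge; these small cases are precisely what force the explicit hypotheses $rank(P)=1$, $p^n>3$, and the separate treatment of $\mathrm{PSL}(2,9)$.

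For the converse I would run the class-size computations of the previous two paragraphs in reverse. For each of (I)--(V) the set $N(G)$ is either a singleton, a pair of coprime integers, or the explicitly listed $SL(2,p^n)$-type sizes with $p^n>3$ (respectively the Schur-cover sizes for $\mathrm{PSL}(2,9)$), and in every case $N(G)$ is an antichain, so $\Gamma(G)$ has no edges and $G$ is an $SP$-group. Combining the two directions yields the stated equivalence.
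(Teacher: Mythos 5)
Your overall route is the same as the paper's: combine Theorem \ref{primeiro} with the $CH$-classification (Lemma \ref{DHJ}) and sieve each class by the antichain condition on $N(G)$; on the classes you do treat, your arguments agree in substance with the paper's (your nilpotent case is the paper's treatment of class (V), your Frobenius discussion matches Lemma \ref{dois} and the lemma for class (III), and your $SL_2(q)$ and Schur-cover checks are Lemmas \ref{seis} and \ref{sete}). The problem is that the $CH$-classification has \emph{seven} classes and you sieve only five of them. You never address class (I) --- nonabelian groups with an abelian normal subgroup $A$ of prime index $p$ --- and these are not subsumed by your three headings: they need not be nilpotent, and their central quotient need not be Frobenius. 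For instance, $G=(\mathbb{Z}_3\times\mathbb{Z}_4)\rtimes\mathbb{Z}_2$, with the involution inverting both factors, is a $CH$-group of class (I) with $G/Z$ dihedral of order $12$, hence not a Frobenius quotient and not nilpotent. This class is where the paper does its hardest work (Lemma \ref{um}): take a Hall $p'$-subgroup $H\leq A$, split on whether a Sylow $p$-subgroup $P$ acts trivially on $H$, use the coprime-action decomposition of Lemma \ref{Gor_5.2.3} to produce a normal subgroup whose noncentral elements have index exactly $p$, and invoke Camina's lemma (Lemma \ref{Complement}) in the trivial-action case; the conclusion is that an $SP$-group of class (I) is either $T\times P$ with $rank(P)=1$, or has abelian Sylow $p$-subgroup and then falls under type (II). Without some version of this argument your forward direction is incomplete.

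You also omit class (IV): groups with $G/Z\simeq S_4$ and nonabelian preimage $V$ of the Klein four subgroup. Since no $S_4$-type appears in the statement of Theorem \ref{segundo}, the ``only if'' direction requires a proof that no such group is an $SP$-group, and this cannot be skipped: these are genuine $CH$-groups lying in none of the classes you analyze. The paper rules them out by producing two class sizes $6$ and $12$: an element $h$ whose image in $G/Z$ has order $4$ satisfies $C_G(h)=Z\la h\ra$, giving index $6$, while an element $g$ mapping to a transposition satisfies $C_G(g)=Z\la g\ra$ (here the nonabelianness of the preimage of $C_{G/Z}(\overline{g})$ is what forbids a larger centralizer), giving index $12$; since $6\mid 12$, the graph $\Gamma(G)$ has an edge. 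Two smaller inaccuracies: the bound $p^n>3$ does not ``emerge'' from your antichain check --- it is already a hypothesis of class (VI) in Lemma \ref{DHJ} (though the verification that $N(SL_2(q))$ is an antichain for $q>3$ is indeed needed and is what the paper does); and your claim that a noncentral kernel element has class size $|L/Z|$ holds only when $K$ is abelian --- in class (III) the correct value is $Ind(P,x)\cdot|L/Z|$ --- although your subsequent wording effectively accounts for this.
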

	
	\begin{cor}\label{Cor1}
		If $\Gamma(G)$ is an edgeless graph with two vertices, then $G/Z$ is a solvable Frobenius group.
	\end{cor}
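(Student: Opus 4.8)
The plan is to read the corollary as the rank-$2$ specialisation of Theorem~\ref{segundo} and then discard every type on that list whose conjugate rank is not $2$. By definition, $\Gamma(G)$ being edgeless means precisely that $G$ is an $SP$-group, while $\Gamma(G)$ having exactly two vertices means $|N(G)\setminus\{1\}|=2$, i.e. $rank(G)=2$. Hence Theorem~\ref{segundo} applies and $G$ belongs to one of the classes (I)--(V); it remains to determine which of these are consistent with $rank(G)=2$ and to check that the survivors have solvable Frobenius central quotient.

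First I would eliminate type (I). There $G=T\times P$ with $T$ abelian and $rank(P)=1$, so every non-central class of $G$ has size equal to the unique non-trivial class size of $P$; thus $rank(G)=1\neq 2$ and this type cannot occur. In types (II) and (III) I would then simply read off the conclusion: the kernel $K/Z$ is abelian in (II) and a homomorphic image of the $p$-group $P$ (hence a $p$-group, hence nilpotent) in (III), while the complement $L/Z$ is abelian in both cases; therefore $G/Z$ is a Frobenius group with nilpotent kernel and abelian complement, and in particular solvable. These are exactly the groups we want to produce.

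The substantive step is to rule out the non-solvable types (IV) and (V), and here the plan is to show that in each of them $rank(G)\ge 3$, so that they are incompatible with the two-vertex hypothesis. Using the known conjugacy data of $SL(2,p^n)$, $PSL(2,p^n)$, $PGL(2,p^n)$ (and the Schur cover of $PSL(2,9)$ in case (V)), one sees three genuinely distinct families of non-central class sizes --- those coming from unipotent elements, from regular elements of a split maximal torus, and from regular elements of a non-split maximal torus --- yielding at least three distinct values of $|x^G|$ once $p^n>3$. I expect the main obstacle to be the bookkeeping needed to transfer these counts from the central quotient $G/Z$ up to $G$ itself, since a priori a central extension can merge class sizes. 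The cleanest way is to note that for each $x\in G$ the class size $|\bar x^{G/Z}|$ divides $|x^G|$ (because $C_G(x)Z/Z\le C_{G/Z}(\bar x)$, so the index of the larger subgroup divides that of the smaller), and then to exhibit three elements of $G$ whose images realise the three distinct sizes above while keeping the corresponding sizes in $G$ distinct; the explicit centraliser orders in $SL(2,p^n)$ make this routine. Once types (I), (IV) and (V) are discarded, only (II) and (III) remain, and by the second paragraph $G/Z$ is then a solvable Frobenius group, which completes the proof.
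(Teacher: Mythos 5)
Your proposal is correct and follows essentially the route the paper intends: the corollary is read off from Theorem~\ref{segundo} by eliminating type (I) as having rank $1$, eliminating types (IV) and (V) as having rank $3$, and observing that the surviving types (II) and (III) have $G/Z$ Frobenius with nilpotent kernel and abelian complement, hence solvable. The only step you defer as ``routine'' --- verifying that the class sizes of types (IV) and (V) stay three in number after the central extension --- is already contained in the proofs of Lemmas~\ref{seis} and~\ref{sete}, where $N(G)$ is computed to equal $N(SL_2(q))$, $N(GL_2(q))$, or $\{72,90,120\}$, each of cardinality three; citing those lemmas lets you skip the torus/unipotent bookkeeping entirely.
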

	
	In \cite{DJ09}, Dolfi and Jabara studied groups of rank $2$ and, in particular, from their description one can also obtain Corollary \ref{Cor1}.
	
	\begin{cor}
		The graph $\Gamma(G)$ of an $SP$-group $G$ contains at most 3 vertices.
	\end{cor}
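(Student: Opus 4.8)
The number of vertices of $\Gamma(G)$ is exactly $|N(G)| = rank(G)$, the number of distinct noncentral conjugacy class sizes, and the $SP$ condition says precisely that no one of these sizes divides another. The plan is therefore to invoke the classification of Theorem~\ref{segundo} and to bound $rank(G)$ in each of the five types. Since the vertices are the class sizes of $G$ itself and not of $G/Z$, some care is needed in passing through the central extension.

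For the solvable types the count is short. In type (I) every noncentral element has class size equal to the unique nontrivial size of the rank-$1$ factor $P$, so $rank(G)=1$. In type (II), with $K,L$ abelian and $G/Z=(K/Z)\rtimes(L/Z)$ Frobenius, the fixed-point-free action forces $C_G(x)=K$ for $x\in K\setminus Z$ and $C_G(y)$ a conjugate of $L$ otherwise, giving the two sizes $|G:K|$ and $|G:L|$, so $rank(G)=2$. In type (III) the same fixed-point-free analysis confines the centralizer of every noncentral kernel element to $K=PZ$, and because $rank(P)=1$ all such elements share a single size $|G:K|\,p^{a}$, while the complement again contributes the single size $|K/Z|$; hence $rank(G)=2$ here as well (the two sizes being automatically incomparable, since one is a pure $p$-power exceeding the $p$-part of the other).

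The essential work lies in types (IV) and (V). The subtle point is that, for odd $q$, the groups $PSL(2,q)$ and $PGL(2,q)$ are themselves \emph{not} $SP$: their semisimple involutions have an enlarged (dihedral) centralizer, producing an extra size $q(q\pm1)/2$ that divides $q(q\pm1)$. Passing to the groups actually allowed in the classification repairs this. In $SL(2,q)$ and $GL(2,q)$ (the models for $G$ up to central direct factors, which do not affect class sizes) the involutions of the quotient lift to elements of order $4$ lying in maximal tori, whose centralizer is just the torus, so the spurious size disappears; a direct centralizer computation on unipotent, split-semisimple and nonsplit-semisimple elements then leaves exactly three pairwise-incomparable sizes, namely $\{(q^2-1)/2,\,q(q+1),\,q(q-1)\}$ for $SL(2,q)$ and $\{q^2-1,\,q(q+1),\,q(q-1)\}$ for $GL(2,q)$, whence $rank(G)=3$. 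Type (V) is the exceptional case $q=9$, where the identical conclusion is drawn from the explicit class data of the triple and sextuple covers of $PSL(2,9)\cong A_6$. In every type $rank(G)\le 3$, which proves the corollary. The main obstacle I anticipate is precisely this transfer from the almost-simple quotient to the extension $G$ — verifying that the center neither merges the three sizes nor creates a fourth — together with the reliance on explicit (ATLAS-type) data for the exceptional covers in type (V).
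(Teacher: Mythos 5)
Your proposal is correct and follows essentially the same route as the paper: the corollary is read off from the classification of Theorem~\ref{segundo}, counting the distinct class sizes type by type (one size in type (I), two in types (II)--(III), three in types (IV)--(V)), exactly as the paper's Lemmas~\ref{um}--\ref{sete} do when they compute $N(G)$ explicitly for each case. Your additional remarks (incomparability checks, the failure of $PSL(2,q)$ itself to be $SP$, and the central-extension transfer, which the paper handles via the minimal subgroup $X$ with $XZ/Z=G/Z$ and $N(G)=N(X)$) are consistent with, and slightly more detailed than, the paper's treatment.
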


	\section{Preliminaries}
	
	Let $G$ be a group and take $x\in G$. We denote by $x^G$ the conjugacy class of $G$ containing $x$ and $C_H(x)$ is the centralizer of $x$ in the subgroup $H$. If $N$ is a subgroup of $G$, then $Ind(N,x)=|N|/|C_N(x)|$. Note that $Ind(G,x)=|x^G|$.
	
	\begin{lem}[{\rm \cite[Lemma 1]{Camin}}]\label{Complement}If, for some prime $p$, every $p'$-element of a group $G$ has index prime to $p$, then the Sylow $p$-subgroup of $G$ is a direct factor of $G$.
	\end{lem}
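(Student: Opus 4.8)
The plan is to combine an orbit‑counting reformulation of the hypothesis with an induction on $|G|$, finishing with a covering argument. First I would record the key reformulation: if $x$ is a $p'$-element then $Ind(G,x)=|x^G|$ is prime to $p$, so letting a Sylow $p$-subgroup $S$ act on the conjugacy class $x^G$ by conjugation, all orbit sizes are powers of $p$ while $|x^G|\not\equiv 0\pmod p$; hence $S$ fixes some $z\in x^G$, i.e. $S\leq C_G(z)$. Thus every $p'$-element is $G$-conjugate into $C_G(P)$ for a fixed Sylow $p$-subgroup $P$, and in particular every $p'$-element centralizes $O_p(G)$ (which lies in every Sylow $p$-subgroup). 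I would also note that the hypothesis is inherited by quotients, since every $p'$-element of $G/M$ lifts to a $p'$-element of $G$ and class sizes only shrink under quotients.

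Next I would argue by induction on $|G|$, taking $G$ a minimal counterexample. If $O_{p'}(G)\neq 1$, applying the inductive hypothesis to $G/O_{p'}(G)$ yields a decomposition into the Sylow $p$-subgroup times a normal $p'$-complement; pulling this back produces a normal $p'$-subgroup $K\trianglelefteq G$ with $|K|=|G|_{p'}$, so $G=PK$ with $P\cap K=1$, i.e. $G$ is $p$-nilpotent. So I may assume $O_{p'}(G)=1$. Now pick a minimal normal subgroup $M$; by induction $G/M$ already has its Sylow $p$-subgroup as a direct factor. If $M$ were a $p'$-group it would lie in $O_{p'}(G)=1$, impossible. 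Hence $M$ is an elementary abelian $p$-group, and the complement in $G/M$ lifts (via Schur–Zassenhaus) to $K=M\rtimes R$ with $R$ a Hall $p'$-subgroup; since $R$ is generated by $p'$-elements it centralizes $M$, so $R$ is the characteristic Hall $p'$-subgroup of $K\trianglelefteq G$ and therefore $R\trianglelefteq G$. Then $R\leq O_{p'}(G)=1$, forcing $|G|_{p'}=1$, so $G$ is a $p$-group and the conclusion is trivial — the desired contradiction. This reduces everything to producing the normal $p$-complement $K$.

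The clean finish, and the step I expect to be the crux, is upgrading ``$P$ centralizes $K$ modulo smaller terms'' to genuine commuting, i.e. showing $[P,K]=1$ once a normal $p$-complement $K$ is in hand. Every element of $K$ is a $p'$-element, so by the key reformulation each is $G$-conjugate into $C_G(P)$, and being in the normal subgroup $K$ it lies in $C_K(P)=:C$; thus $K=\bigcup_{g\in G}C^{g}$. Since $N_G(P)\leq N_G(C)$, the number of distinct conjugates of $C$ is at most $n_p=|G:N_G(P)|$, and because $C$ centralizes $P$ we have $PC\leq N_G(P)$ with $|PC|=|P|\,|C|$, whence $n_p\leq |K|/|C|$. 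The covering then gives $|K|\leq 1+n_p(|C|-1)\leq |K|+1-|K|/|C|$, forcing $|C|\geq|K|$, so $C=K$ and $[P,K]=1$. Therefore $G=P\times K$ and $P$ is a direct factor. The main obstacle is precisely this last passage from ``conjugate into the centralizer'' to ``contained in the centralizer''; the counting argument above is what resolves it, and verifying the inclusion $PC\leq N_G(P)$ and the conjugate bound carefully is where the care is needed.
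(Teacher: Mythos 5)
The paper itself gives no proof of this lemma --- it is quoted from Camina's article --- so your proposal can only be judged on its own merits. Your Step 1 (the fixed-point argument showing every $p'$-element lies in a conjugate of $C_G(P)$, and the inheritance of the hypothesis by quotients) is correct, and your final counting argument is also correct and is the real content: given a normal $p$-complement $K$, covering $K$ by at most $n_p\leq |K|/|C|$ conjugates of $C=C_K(P)$ forces $C=K$. The genuine gap is in the middle, in the induction step. From ``$M$ is not a $p'$-group'' you conclude ``hence $M$ is an elementary abelian $p$-group,'' but a minimal normal subgroup of a finite group need not be elementary abelian at all: it can be a direct product of isomorphic nonabelian simple groups whose order is divisible by $p$ and by other primes. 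Your case analysis silently excludes exactly this possibility, and no step of your argument rules it out.

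Part of this can be patched inside your framework: the hypothesis passes not only to quotients but also to normal (hence subnormal) subgroups, since for a $p'$-element $x\in N\trianglelefteq G$ the centralizer $C_N(x)$ contains $Q\cap N$ for a Sylow $p$-subgroup $Q\leq C_G(x)$; so each simple factor of a nonabelian $M<G$ inherits the hypothesis, and induction then contradicts its simplicity. But this patch cannot reach the rock-bottom case in which $G$ itself is nonabelian simple (equivalently, $O_{p'}(G)=O_p(G)=1$ and $M=G$): there induction gives nothing, and some genuinely new ingredient is needed. The classical ingredient is Frobenius's theorem that the number of solutions of $x^{n}=1$, $n=|G|_{p'}$, is divisible by $n$; this gives the lower bound (number of $p'$-elements) $\geq |G|_{p'}$, and then \emph{your own} covering estimate, applied with $C_G(P)_{p'}$ in place of $C$, yields $|G|_{p'}\leq 1+n_p\bigl(|C_G(P)_{p'}|-1\bigr)$ and hence $|C_G(P)_{p'}|\geq |G|_{p'}$, proving the lemma in one stroke for arbitrary $G$, with no induction and no minimal normal subgroup analysis. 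So the counting idea you isolated as the crux is indeed the whole proof; what is missing is the Frobenius-type lower bound that replaces the (unjustified) reduction to a normal $p$-complement.
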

	\begin{lem}[{\rm \cite[Lemma 1.4]{GorA2}}]\label{factorKh}
		Let $G$ be a finite group, $K\unlhd G$ and $\overline{G}= G/K$. Take $x\in G$ and $\overline{x}=xK\in G/K$.
		Then the following conditions hold
		
		(i) $|x^K|$ and $|\overline{x}^{\overline{G}}|$ divide $|x^G|$.
		
		(ii) If $L$ and $M$ are consequent members of a composition series of $G$, $L<M$, $S=M/L$, $x\in M$  and
		$\widetilde{x}=xL$ is an image of $x$, then $|\widetilde{x}^S|$ divides $|x^G|$.
		
		(iii) If $y\in G, xy=yx$, and $(|x|,|y|)=1$, then $C_G(xy)=C_G(x)\cap C_G(y)$.
		
		(iv) If $(|x|, |K|) = 1$, then $C_{\overline{G}}(\overline{x}) = C_G(x)K/K$.
		
		(v) $\overline{C_G(x)}\leq C_{\overline{G}}(\overline{x})$.
	\end{lem}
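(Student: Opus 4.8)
The five assertions are standard facts about the behaviour of conjugacy classes under quotients and coprime decompositions, so the plan is to prove them in the order $(v),(i),(ii),(iii),(iv)$, reducing the divisibility claims to index computations and reserving the genuine work for $(iv)$. I would begin with $(v)$: if $c\in C_G(x)$ then $\overline c\,\overline x=\overline{cx}=\overline{xc}=\overline x\,\overline c$, so $\overline{C_G(x)}=C_G(x)K/K$ is contained in $C_{\overline G}(\overline x)$. For the second divisibility in $(i)$ I use this containment: since $|\overline x^{\overline G}|=[\overline G:C_{\overline G}(\overline x)]$ and $\overline{C_G(x)}\le C_{\overline G}(\overline x)$, the index $[\overline G:C_{\overline G}(\overline x)]$ divides $[\overline G:\overline{C_G(x)}]=[G:C_G(x)K]$, which in turn divides $[G:C_G(x)]=|x^G|$. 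For the first divisibility, $C_K(x)=K\cap C_G(x)$, so by the second isomorphism theorem $|x^K|=[K:K\cap C_G(x)]=[KC_G(x):C_G(x)]$, a divisor of $[G:C_G(x)]=|x^G|$.

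For $(ii)$ the crucial observation is a transitivity argument: if $M\unlhd M'$ and $x\in M$, then the $M'$-class $x^{M'}$ is a disjoint union of $M$-classes that $M'$ permutes transitively by conjugation — normality of $M$ gives $(a^M)^g=(a^g)^M$ for $g\in M'$ — so these $M$-classes all have the common size $|x^M|$, whence $|x^M|$ divides $|x^{M'}|$. Running this along the subnormal chain $M\unlhd\dots\unlhd G$ supplied by the composition series yields $|x^M|\mid|x^G|$, and applying $(i)$ to $L\unlhd M$ gives $|\widetilde x^{\,S}|\mid|x^M|$; combining the two proves $(ii)$. Part $(iii)$ is immediate: commuting elements of coprime order satisfy $\langle x,y\rangle=\langle x\rangle\times\langle y\rangle=\langle xy\rangle$, so $x$ and $y$ are powers of $xy$; hence $C_G(xy)\subseteq C_G(x)\cap C_G(y)$, while the reverse inclusion is trivial.

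The main obstacle is $(iv)$, where I expect to need the conjugacy part of Schur--Zassenhaus. Set $D=\langle x\rangle K$; since $(|x|,|K|)=1$, the group $K$ is a normal Hall subgroup of $D$ with complement $\langle x\rangle$. Given $\overline g\in C_{\overline G}(\overline x)$ we have $x^g\in xK\subseteq D$ and $|x^g|=|x|$, so $\langle x^g\rangle$ is a second complement to $K$ in $D$. As $\langle x\rangle$ is solvable, all complements to $K$ in $D$ are $K$-conjugate, giving $k\in K$ with $\langle x^g\rangle=\langle x\rangle^k=\langle x^k\rangle$. Now $x^g$ and $x^k$ both lie in the cyclic group $C=\langle x^g\rangle$ and in the coset $xK$ (for $x^k$ this holds because $\overline k=1$), and since $C\cap K=1$ the coset $xK$ meets $C$ in exactly one point; therefore $x^g=x^k$. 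Hence $gk^{-1}\in C_G(x)$, so $\overline g\in\overline{C_G(x)}$, which is the reverse of $(v)$ and establishes $C_{\overline G}(\overline x)=C_G(x)K/K$.
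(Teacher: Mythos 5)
Your proposal is correct, but there is nothing in the paper to compare it against: the paper does not prove this lemma at all, it imports it by citation from \cite{GorA2} (Lemma 1.4 there) and uses it as a black box. So your write-up should be judged on its own, and it stands up. Parts (v), (i), (iii) are handled exactly as one should: the containment $\overline{C_G(x)}\leq C_{\overline{G}}(\overline{x})$, the index identities $[K:K\cap C_G(x)]=[KC_G(x):C_G(x)]$ and $[\overline{G}:\overline{C_G(x)}]=[G:C_G(x)K]$, and the observation that commuting elements of coprime order are both powers of their product. Your proof of (ii) correctly supplies the one idea that is easy to overlook, namely that $M$ need only be \emph{subnormal} in $G$, so one needs the transitivity step ($M\unlhd M'$ forces $x^{M'}$ to be a disjoint union of equally sized $M$-classes, whence $|x^M|$ divides $|x^{M'}|$) iterated along the composition series, followed by (i) applied inside $M$. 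Part (iv) is the only place requiring real machinery, and your use of the conjugacy half of the Schur--Zassenhaus theorem is the standard and correct choice: $K$ is a normal Hall subgroup of $D=\langle x\rangle K$, the quotient $D/K$ is cyclic hence solvable (so no appeal to Feit--Thompson is needed), $\langle x^g\rangle$ is a second complement, complements are $K$-conjugate, and the observation that a coset of $K$ meets a complement in at most one point pins down $x^g=x^k$, giving $gk^{-1}\in C_G(x)$. This is essentially the textbook proof (compare Kurzweil--Stellmacher or Isaacs); the only common alternative replaces Schur--Zassenhaus by a coprime-action/counting argument, which buys nothing here. No gaps.
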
	
	
	The following two lemmas are simple exercises.
	\begin{lem}\label{ab}
		Let $P$ be a $p$-group. Then $P/Z(P)$ is not a cyclic group.
	\end{lem}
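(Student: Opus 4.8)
The plan is to prove the standard contrapositive fact that a group with cyclic central quotient is abelian, which here forces $P/Z(P)$ to be trivial rather than a genuine cyclic group. So I would argue by contradiction: suppose $P/Z(P)$ is cyclic and generated by the coset $gZ(P)$ for some $g\in P$. Every element of $P$ then lies in one of the cosets $g^iZ(P)$, and hence can be written in the form $g^{i}z$ with $i\in\mathbb{Z}$ and $z\in Z(P)$.

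Next I would take two arbitrary elements $x=g^{i}z_1$ and $y=g^{j}z_2$ of $P$, with $z_1,z_2\in Z(P)$, and check directly that they commute: since $z_1$ and $z_2$ are central and the powers $g^{i}$ and $g^{j}$ commute with one another, we obtain $xy = g^{i}g^{j}z_1z_2 = g^{j}g^{i}z_2z_1 = yx$. Hence $P$ is abelian, so $Z(P)=P$ and $P/Z(P)$ is the trivial group. This contradicts the assumption that $P/Z(P)$ is a nontrivial cyclic group, which yields the claim.

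I do not expect any real obstacle here, as this is a classical fact and indeed stated as a simple exercise; the only point deserving a word of care is the reading of the statement, since the trivial group is formally cyclic, so the lemma should be understood as asserting that $P/Z(P)$ is never a \emph{nontrivial} cyclic group. I would also remark that the hypothesis that $P$ is a $p$-group is not actually used: the same argument applies verbatim with an arbitrary group $G$ in place of $P$.
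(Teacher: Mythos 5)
Your proof is correct: the paper states this lemma without proof (it is one of the ``simple exercises''), and your argument is the standard one --- a cyclic central quotient forces $P$ to be abelian, so $P/Z(P)$ can only be trivial, never a nontrivial cyclic group. Your caveat about the reading of the statement is exactly right, since the paper applies the lemma in precisely that form (in Lemma~\ref{um}, from $A\leq Z(P)$ and $|P/A|=p$ it concludes that $P$ is abelian), and you are also correct that the $p$-group hypothesis plays no role in the argument.
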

	
	\begin{lem}\label{cenfak}
		Let $G$ be a finite group, $K\unlhd G, x\in G$. Then $C_G(x)K/K\leq C_{G/K}(xK)$.
	\end{lem}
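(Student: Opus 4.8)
The plan is to verify the containment directly at the level of elements, since both sides are subgroups of $G/K$. First I would observe that $C_G(x)K$ is a subgroup of $G$: it is the product of the subgroup $C_G(x)$ with the normal subgroup $K$, and such a product is always a subgroup. Because it contains $K$, the quotient $C_G(x)K/K$ is a well-defined subgroup of $G/K$, so the asserted inequality compares two genuine subgroups and the statement makes sense.

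Next I would take an arbitrary element of $C_G(x)K/K$. Every coset in $C_G(x)K/K$ has a representative of the form $ck$ with $c\in C_G(x)$ and $k\in K$, but since $k\in K$ we have $(ck)K=cK$; hence every element of $C_G(x)K/K$ is of the form $cK$ for some $c\in C_G(x)$. The key computation is then that for such $c$ we have $cx=xc$, and therefore in $G/K$
$$(cK)(xK)=cxK=xcK=(xK)(cK),$$
so $cK$ commutes with $xK$ and thus lies in $C_{G/K}(xK)$.

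Combining these two observations yields the containment: every element of $C_G(x)K/K$ belongs to $C_{G/K}(xK)$. There is essentially no obstacle here, since the statement is a direct consequence of the fact that the canonical projection $G\to G/K$ is a homomorphism and so sends commuting elements to commuting elements; the only point requiring a line of care is checking that $C_G(x)K/K$ is a legitimate subgroup, which follows from the normality of $K$. I would also remark that the reverse inclusion can fail, which is why only $\leq$ is asserted here; equality is recovered under a coprimeness hypothesis, as in Lemma \ref{factorKh}(iv).
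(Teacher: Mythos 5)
Your proof is correct: the element-wise computation $(cK)(xK)=cxK=xcK=(xK)(cK)$ for $c\in C_G(x)$ is exactly the intended argument, and the paper itself offers no written proof, stating only that this lemma is a simple exercise. Your added care in checking that $C_G(x)K/K$ is a genuine subgroup of $G/K$, and the remark that equality requires a coprimeness hypothesis as in Lemma \ref{factorKh}(iv), are both accurate.
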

	
	\begin{defi}
		A finite group $G$ is called a $CH$-group if for every $x, y\in G\setminus Z(G), xy=yx$ implies that $|C_G(x)|=|C_G(y)|$.
	\end{defi}
	
	\begin{lem}\cite[Theorem 4.2]{DJ09}\label{DHJ}
		Let $G$ be a nonabelian group and write $Z=Z(G)$. Then $G$ is a $CH$-group
		if and only if it is of one of the following types.
		\begin{enumerate}
			\item[(I)]  $G$ is nonabelian and has an abelian normal subgroup of prime index.
			
			\item[(II)] $G/Z$ is a Frobenius group with Frobenius kernel $K/Z$ and Frobenius
			complement $L/Z$, where $K$ and $L$ are abelian.
			
			\item[(III)] $G/Z$ is a Frobenius group with Frobenius kernel $K/Z$ and Frobenius
			complement $L/Z$, such that $K=PZ$, where $P$ is a normal Sylow $p$-subgroup of
			$G$ for some $p\in\pi(G)$, $P$ is a $CH$-group, $Z(P)=Z\cap P$ and $L\simeq HZ$, where $H$
			is an abelian $p'$-subgroup of $G$.
			
			\item[(IV)] $G/Z\simeq S_4$ and if $V/Z$ is the Klein four group in $G/Z$, then $V$ is nonabelian.
			
			\item[(V)] $G\simeq P\times A$, where $P$ is a nonabelian $CH$-group of prime-power order and $A$ is abelian.
			
			\item[(VI)] $G/Z\simeq PSL(2, p^n)$ or $PGL(2, p^n)$ and $G'\simeq SL(2, p^n)$, where $p$ is a prime and $p^n>3$.
			
			\item[(VII)] $G/Z\simeq PSL(2, 9)$ or $PGL(2, 9)$ and $G'$ is isomorphic to the Schur cover of
			$PSL(2, 9)$.
			
		\end{enumerate}
	\end{lem}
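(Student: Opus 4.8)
The plan is to prove both directions by leaning on the inclusion $CH\subset F$ (part of the chain $CA\subset CH\subset F$ established in \cite{DHJ}) together with Rebmann's classification of $F$-groups \cite{Reb}. The inclusion itself is immediate from the definitions: if $x,y\in G\setminus Z$ satisfy $C_G(x)\leq C_G(y)$, then $x\in C_G(x)\leq C_G(y)$ forces $xy=yx$, and the $CH$ hypothesis then gives $|C_G(x)|=|C_G(y)|$, whence $C_G(x)=C_G(y)$. So every $CH$-group is an $F$-group, and Rebmann's theorem already places $G$ in a short list whose members match (I)--(VII) up to the precise strength of the conditions imposed on the prime-power-order factors. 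Consequently the entire content of the ``only if'' direction is to determine what extra restriction the sharper ``equal order'' demand of $CH$ places inside each Rebmann family, while the ``if'' direction is a type-by-type verification.

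For the ``if'' direction I would run a uniform case analysis on commuting noncentral pairs $x,y$, the governing principle being that in each listed type the commuting graph on $G\setminus Z$ breaks into pieces on which the centralizer order is constant. In type (I), with $A$ the abelian normal subgroup of prime index $p$, a commuting noncentral pair lies either both in $A$ (where $C_G=A$ for each, since $A\leq C_G(x)\lneq G$ of prime index) or both outside $A$; in the latter case $y=ax^k$ with $a\in A$ and $1\leq k\leq p-1$, and because $G/A$ is cyclic of prime order the induced automorphism of $A$ satisfies $\langle\phi^k\rangle=\langle\phi\rangle$, so $C_A(y)=C_A(x^k)=C_A(x)$ and $|C_G(x)|=|C_A(x)|\,p=|C_G(y)|$. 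In the Frobenius types (II) and (III) I would use that commuting nonidentity elements of a Frobenius group lie together in the kernel or in a single conjugate of the complement, so each pair sits inside $K$ or inside a conjugate of $L$; when $K,L$ are abelian the centralizer is forced to be exactly $K$ or $L^g$, and the nonabelian kernel case of (III) is closed by invoking the $CH$ property of the normal Sylow $p$-subgroup $P$. Type (V) is the transparent model, since $C_G((p,a))=C_P(p)\times A$ makes $G$ a $CH$-group precisely when $P$ is one; this is exactly why the $F$-group factor in Rebmann's list must be upgraded to a $CH$-group. The types (IV), (VI), (VII) are then checked from the known centralizer data of $SL(2,p^n)$, $PGL(2,p^n)$, the Schur cover of $PSL(2,9)$, and the relevant extension of $S_4$.

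The ``only if'' refinement is obtained by running the same analysis in reverse: restricting the $CH$ condition to the $p$-subgroup in types (III) and (V) shows that an $F$-group factor is insufficient and the factor must itself be a $CH$-group, while tracking central elements yields the normalizations $Z(P)=Z\cap P$ and $L\simeq HZ$ with $H$ an abelian $p'$-group. The $S_4$-type is the most instructive discrepancy: in $S_4$ a $4$-cycle $c$ and the double transposition $c^2$ commute yet have centralizers of orders $4$ and $8$, so $S_4$ itself is not $CH$, and the requirement that the preimage $V$ of the Klein four subgroup be nonabelian is exactly what shrinks $C_G(\widetilde{c^2})$ back to $|C_G(\widetilde c)|$. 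I expect the main obstacle to be the Frobenius type (III): since the relevant primes are no longer coprime to $|Z|$ one cannot simply pass to $G/Z$, and one must instead control how centralizers in $G$ relate to those in $P$ and to $Z$ in order to pin down $Z(P)=Z\cap P$ and the shape $L\simeq HZ$ of the complement. Determining the exact central extension in the semisimple cases, namely that $G'\simeq SL(2,p^n)$ (respectively the Schur cover of $PSL(2,9)$) rather than a smaller quotient, is the second delicate point, and is settled by exhibiting commuting noncentral involutions whose centralizer orders would disagree under any other extension.
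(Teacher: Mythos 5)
This statement is not proved in the paper at all: it is imported verbatim by citation (and, incidentally, the citation appears to be off --- the classification of $CH$-groups with proof is in the Dolfi--Herzog--Jabara reference \cite{DHJ}, not in the Dolfi--Jabara rank-$2$ paper \cite{DJ09} as the lemma's tag suggests), so there is no in-paper proof to compare against. Your sketch does follow the architecture of the actual proof in the cited literature: the two-line argument that $CH\subset F$ is exactly right, and Dolfi--Herzog--Jabara do proceed precisely by taking Rebmann's classification of $F$-groups (whose list coincides with (I)--(VII) up to the strength of the condition on the prime-power factors) and determining the extra constraints the equal-order condition imposes in each family. The details you actually write out check: in type (I) the induced automorphism $\phi$ has order dividing $p$ because $x^p\in A$ acts trivially on the abelian $A$, so $\langle\phi^k\rangle=\langle\phi\rangle$ and the centralizer count $|C_G(x)|=p\,|C_A(x)|$ goes through; the Frobenius facts used in (II) (centralizers of nontrivial kernel, respectively complement, elements lie in the kernel, respectively in that complement) are standard; type (V) is as transparent as you say; and the $4$-cycle versus double-transposition computation showing $S_4\notin CH$ is correct. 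That said, be aware that as a standalone proof your text is a reduction, not a proof: it assumes Rebmann's theorem wholesale (itself a paper-length classification), and the hardest refinements --- deriving $Z(P)=Z\cap P$ and $L\simeq HZ$ in type (III), and pinning down $G'\simeq SL(2,p^n)$ (respectively the Schur cover of $PSL(2,9)$) in types (VI)--(VII) --- are stated as intentions rather than carried out; for the quasisimple cases the efficient route is to note that $SL(2,q)$ is a $CA$-group, so commuting noncentral elements automatically have equal centralizers, rather than hunting for commuting involutions.
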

	
	\begin{defi}
		We will say that $G$ lies in the class $F(i)$, where $i\in\{ I, II, .., VII\}$ and write $G\in CH(i)$ if $G$ is a $CH$-group and item $i$ from Lemma \ref{DJ} is satisfied.
	\end{defi}

	\begin{defi}
		A finite group $G$ is called an $SP$-group if for every $x, y\in G\setminus Z(G)$, $|C_G(x)|$ divides $|C_G(y)|$ implies that $|C_G(x)|=|C_G(y)|$.
	\end{defi}
	\begin{rem}
		A set of integers greater than $1$ is primitive if no element of the set divides
		another. The definition of $SP$-groups is equivalent to a requirement that $N(G)$ is a primitive set. 
		
	\end{rem}
	
	\begin{lem}\label{Gor_5.2.3}{\normalfont\cite[Theorem~5.2.3]{Gore}}
		Let $A$ be a $p'$-group of automorphisms of abelian $p$-group $P$. Then $P=C_P(A)\times [P,A]$.
	\end{lem}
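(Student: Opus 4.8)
The plan is to run the standard coprime-action (Maschke-type) argument, exploiting that $P$ is abelian so that it becomes a module over the integral group ring of $A$. I would write $P$ additively; then the action of $A$ makes $P$ a module, the fixed-point subgroup $C_P(A)$ is the submodule of $A$-invariants, and $[P,A]$ is the submodule generated by the elements $a\cdot x-x$ for $a\in A$ and $x\in P$ (translating the commutators $[x,a]=x^{-1}x^a$ into additive notation).

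First I would observe that since $P$ is a $p$-group and $A$ is a $p'$-group, the integer $n=|A|$ is coprime to $|P|$, so multiplication by $n$ is an automorphism of $P$; let $m$ be an integer with $mn\equiv 1$ modulo the exponent of $P$. This makes it legitimate to define the averaging (Reynolds) endomorphism $e=m\sum_{a\in A}a$ of $P$, which by construction commutes with the $A$-action. This is the one place where coprimality is genuinely used.

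Next I would verify the two defining properties of $e$. A short computation using the substitution $a\mapsto ba$ shows $b\cdot e(x)=e(x)$ for all $b\in A$, so $\operatorname{im}(e)\subseteq C_P(A)$; and for $x\in C_P(A)$ one computes $e(x)=m\cdot n\cdot x=x$, so $e$ restricts to the identity on $C_P(A)$. Hence $\operatorname{im}(e)=C_P(A)$ and $e$ is idempotent. On the other side, the identity $x-e(x)=-m\sum_{a\in A}(a\cdot x-x)$ exhibits $x-e(x)\in[P,A]$ for every $x$, while $e$ annihilates each generator $a\cdot x-x$ of $[P,A]$; combining these two facts gives $\ker(e)=[P,A]$.

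Finally, an idempotent endomorphism of an abelian group splits it as the direct sum of its image and its kernel, so $P=\operatorname{im}(e)\oplus\ker(e)=C_P(A)\oplus[P,A]$, which is exactly the asserted decomposition $P=C_P(A)\times[P,A]$ in multiplicative notation. I do not expect a real obstacle: the argument is routine once $m=n^{-1}$ is available, and the only steps demanding care are the identifications $\operatorname{im}(e)=C_P(A)$ and $\ker(e)=[P,A]$, both of which reduce to the elementary computations indicated above.
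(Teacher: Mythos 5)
Your proof is correct and complete: coprimality of $|A|$ with $|P|$ legitimizes the Reynolds idempotent $e=m\sum_{a\in A}a$, and the identifications $\operatorname{im}(e)=C_P(A)$, $\ker(e)=[P,A]$ you indicate do go through exactly as stated. The paper itself gives no proof (the lemma is quoted from Gorenstein's book), and your argument is essentially the standard one found there, with the cosmetic difference that Gorenstein works multiplicatively with the map $x\mapsto\prod_{a\in A}x^{a}$ and uses that $x\mapsto x^{|A|}$ is an automorphism of $P$, instead of introducing the explicit inverse $m$ and an idempotent.
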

	
	\section{Proof of Theorem 1}
	%In this section we will prove that $SP$-groups are $CH$-groups Theorem \ref{um}. Using this statement, a description of $SP$-groups is obtained up to the center Theorem \ref{dois}.
	%Let $G$ is $SP$-group and $Z=Z(G)$.
	%\begin{theorem}
	%$SP\subset CH$
	%\end{theorem}	
	%\begin{proof}
	
	Let $G\in SP$, $Z=Z(G)$, and $x,y\in G\setminus Z$ be commuting elements. Let us show that $|C_G(x)|=|C_G(y)|$. We have $x=x_1,x_2,..x_n$, where $x_i$ are elements of primary and coprime order. Note that $C_G(x_ix_j)=C_G(x_i)\cap C_G(x_j)$. Thus, among the elements $x_1,..,x_n$ there is an element $z$ that does not lie in $Z(G)$. For any $a\in C_G(z)$ such that $(a,z)=1$, we have $C_G(a)=C_G(z)$ or $C_G(a)=G$. Similarly, in $C_G(y)$ there is an element $t$ of primary order such that $C_G(t)=C_G(y)$ and for any $b\in C_G(t)$ such that $(t,b) =1$, we have $C_G(b)=C_G(t)$ or $C_G(b)=G$. Since the elements $x$ and $y$ commute, it follows that the elements $z$ and $t$ commute. If $t$ and $z$ have coprime order, then $C_G(tz)=C_G(t)\cap C_G(z)$ and therefore $C_G(t)=C_G(tz)=C_G(z)$ and the statement has been proven.
	
	Let's assume that $z$ and $t$ are $p$-elements, where $p$ is a prime number. If in the centralizer of $z$ or $t$ there is a $p'$-element $h$ such that $C_G(h)\neq G$, then $C_G(h)=C_G(z)$ and $C_G(h )=C_G(th)=C_G(t)$. Therefore $C_G(z)=C_G(t)$. Thus, if $C_G(z)$ or $C_G(t)$ contains a non-central $p'$ element, then $C_G(z)=C_G(t)$. Suppose that $|C_G(z)|_{p'}=|C_G(t)|_{p'}=|G/Z|_{p'}$. Then $Ind(G,z)=Ind(G,t)p^n$, in particular $Ind(G,z)$ divides $Ind(G,t)$. Therefore $|C_G(z)|=|C_G(t)|$. 
	
	Thus, we have shown that $SP\subset CH$. An example of a $CH$-group that is not $SP$ is given in Remark \ref{example}.
	
	%Пусть $G\in SP$, и $x,y\in G\setminus Z(G)$-- коммутирующие элементы. Покажем, что $|C_G(x)|=|C_G(y)|$. Имеем $x=x_1,x_2,..x_n$, где $x_i$ элементы примарного и взаимнопростого порядка. Заметим, что $C_G(x_ix_j)=C_G(x_i)\cap C_G(x_j)$. Таким образом, среди элементов $x_1,..,x_n$ найдется элемент $z$ не лежащий в $Z(G)$. Для любого $a\in C_G(z)$ такого, что $(a,z)=1$ имеем $C_G(a)=C_G(z)$ или $C_G(a)=G$. Аналогично в $C_G(y)$ найдется элемент $t$ примарного порядка такой, что $C_G(t)=C_G(y)$ и для любого $b\in C_G(t)$ такого, что $(t,b)=1$ имеем $C_G(b)=C_G(t)$ или $C_G(b)=G$. Из того, что элементы $x$ и $y$ коммутируют следует, что элементы $z$ и $t$ коммутируют. Если $t$ и $z$ имеют взаимно простой порядок, то $C_G(tz)=C_G(t)\cap C_G(z)$ и следовательно $C_G(t)=C_G(tz)=C_G(z)$ и утверждение доказано. 
	
	%Допустим что $z$ и $t$ -- $p$-элементы, где $p$-- простое число. Если в централизаторе $z$ или $t$ найдется $p'$-элемент $h$ такой, что $C_G(h)\neq G$, тогда $C_G(h)=C_G(z)$ и $C_G(h)=C_G(th)=C_G(t)$. Следовательно $C_G(z)=C_G(t)$. Таким образом, если $C_G(z)$ или $C_G(t)$ содержит не центральный $p'$-элемент, то $C_G(z)=C_G(t)$. Предположим, что $|C_G(z)|_{p'}=|C_G(t)|_{p'}=|G/Z(G)|_{p'}$. Тогда $Ind(G,z)=Ind(G,t)p^n$, в частности $Ind(G,z)$ делит $Ind(G,t)$. Следовательно $|C_G(z)|=|C_G(t)|$. 
	
	%\end{proof}
	
	\section{Proof of Theorem 2}
	%Будем говорить, что $G$ лежит в классе $CH(i)$, где $i\in\{I,II,..,VII\}$ и писать $G\in CH(i)$, если $G$ является $CH$ группой и выполнен пункт $i$ из леммы \ref{DJ}. 
	It follows from Theorem \ref{primeiro} that any $SP$-group is a $CH$-group. To prove Theorem \ref{segundo}, let us study which $CH$-groups are $SP$-groups.
	
	We will say that $G$ lies in the class $CH(i)$, where $i\in\{I,II,..,VII\}$ and write $G\in CH(i)$ if $G $ is a $CH$ group and item $i$ from Lemma \ref{DJ} is satisfied. Note that the classes $CH(i)$ have intersections, therefore the number $i$ may not be uniquely determined.
	
	Let $G\in SP$, $Z=Z(G)$. 
	\begin{lem}\label{um}
		$G\in F(I)$  if and only if one of the statements is true
		\begin{enumerate}
			\item $G=T\times P$ where $P$ is a Sylow $p$-subgroup of $G$ and $rank(P)=1$; %где, $P$ силовская $p$-подгруппа группы $G$ и $rank(P)=1$.
			\item $P$ is abelian.
		\end{enumerate}
	\end{lem}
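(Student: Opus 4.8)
The plan is to prove the two implications separately, with the forward direction carrying essentially all of the content.

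\emph{Forward implication.} Suppose $G\in SP$ lies in $F(I)$, so $G$ is nonabelian and has an abelian normal subgroup $A$ of prime index $p$; fix $P\in\mathrm{Syl}_p(G)$ and put $B=P\cap A$. First I would collect the cheap facts. Since $G$ is nonabelian we get $Z=Z(G)\le A$ and $A\nleq Z$ (otherwise $G/Z$ is cyclic and $G$ abelian), so $A$ has a noncentral element, whose index divides $[G:A]=p$ and hence equals $p$; thus $p\in N(G)$. Moreover every $p'$-element of $G$ maps to $1$ in $G/A\cong C_p$ and therefore lies in the abelian group $A$, so its index is $1$ or $p$. As $N(G)$ is primitive and contains $p$, \emph{every} element of $N(G)$ other than $p$ is coprime to $p$. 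I then split on whether $P$ is abelian: if it is, conclusion $(2)$ holds and nothing remains.

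\emph{The nonabelian case.} Assume $P$ is nonabelian; the goal is $G=T\times P$ with $rank(P)=1$. The decisive step, which I expect to be the main obstacle, is that $G$ then has no noncentral $p'$-element. Suppose $h$ were one. Since $h\in A$ and $A$ is abelian, $C_G(h)=A$, and writing $A=B\times A_{p'}$ we have $h\in A_{p'}$. Because $P$ is nonabelian and $B$ is an abelian subgroup of index $p$, some $u\in P\setminus B$ acts nontrivially on $B$; such a $u$ lies in $P\setminus A$ and so is noncentral. Here $u\notin A$ forces $C_G(u)A=G$ and $C_G(u)\cap A=C_A(u)$, whence $|u^G|=[A:C_A(u)]=|[B,u]|\cdot|[A_{p'},u]|$, the splitting of the index being exactly the coprime decomposition $A_{p'}=C_{A_{p'}}(u)\times[A_{p'},u]$ supplied by Lemma~\ref{Gor_5.2.3} together with the analogous homomorphism argument on the abelian $p$-group $B$. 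The factor $|[B,u]|$ is a nontrivial power of $p$, so $p\mid|u^G|$, and primitivity forces $|u^G|=p$; hence $|[A_{p'},u]|=1$, i.e. $u$ centralizes $A_{p'}$ and in particular $h$, giving $u\in C_G(h)=A$, a contradiction. With the claim established, all $p'$-elements are central, each therefore has index coprime to $p$, and Lemma~\ref{Complement} makes $P$ a direct factor $G=P\times T$ with $T$ the normal $p$-complement. As $T$ is central it is abelian, while $N(G)=N(P)$ consists of powers of $p$, a set linearly ordered by divisibility; primitivity yields $|N(P)|\le1$, and $P$ nonabelian gives $rank(P)=1$. This is conclusion $(1)$.

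\emph{Converse.} Here I would simply verify that each listed shape returns an $F(I)$-group. Case $(2)$ is a reformulation of the $F(I)$ hypothesis enriched by the abelianness of $P$, so nothing is needed. For case $(1)$, with $T$ abelian and $P$ of rank $1$ as it arises above, the abelian maximal subgroup $B=P\cap A$ of $P$ yields the abelian normal subgroup $A=T\times B$ of index $p$ in $G$, placing $G$ in $F(I)$. The only genuinely delicate point in this direction is that the relevant $P$ really does possess an abelian subgroup of index $p$ (a general rank-$1$ $p$-group need not, as extraspecial groups of order $p^5$ show); one must therefore use the precise form in which $(1)$ is produced by the forward argument, where $B$ is abelian by construction, rather than rank-$1$ alone. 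Controlling $P/Z(P)$ via Lemma~\ref{ab} is what keeps this step honest.
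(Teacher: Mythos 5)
Your forward implication is correct and takes a genuinely different route from the paper's. The paper splits on whether $P$ acts trivially on the Hall $p'$-subgroup $H$: in the nontrivial case it uses Lemma \ref{Gor_5.2.3} to produce a nontrivial subgroup $Y=[H,x]$ normal in $G$ whose nontrivial elements have index exactly $p$, and derives a contradiction with primitivity unless $P$ is abelian; in the trivial case it applies Lemma \ref{Complement} and the fact that a primitive set of $p$-powers has at most one element. You instead split on whether $P$ is abelian and, in the nonabelian case, prove the sharper claim that $G$ has no noncentral $p'$-element at all: an element $u\in P\setminus B$ acting nontrivially on $B$ has index $|[B,u]|\cdot|[A_{p'},u]|$, which is divisible by $p$ and hence equals $p$ by primitivity (since $p\in N(G)$), forcing $u$ to centralize $A_{p'}$ and land in $A$, a contradiction. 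Your finish (Camina's lemma, then a chain of $p$-powers) coincides with the paper's. This half of your proposal is sound and, if anything, cleaner than the paper's.

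The converse is where there is a genuine gap. What the paper proves --- and what the sufficiency half of Theorem \ref{segundo} actually requires --- is that a group of shape (1) or (2) is an \emph{$SP$-group}: in case (1), $N(G)=N(P)$ is a single power of $p$; in case (2), $N(G)=\{p,k\}$ with $k$ prime to $p$ (every element outside $A$ has index prime to $p$, every noncentral element of $A$ has index $p$). You instead read the equivalence literally and try to prove that shapes (1) and (2) land back in $F(I)$. You then correctly observe that this literal statement is false --- $G=T\times P$ with $P$ extraspecial of order $p^5$ satisfies (1) and is an $SP$-group, yet has no abelian normal subgroup of prime index --- and you repair it by restricting to those $P$ ``produced by the forward argument.'' That repair is circular: the ``if'' half of an equivalence must accept an arbitrary group satisfying (1) or (2), not merely the outputs of its own ``only if'' half, so your converse establishes nothing. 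The fact that the literal reading is false is itself the signal that the intended reading is the paper's: within the class $F(I)$, the $SP$-groups are exactly those of shape (1) or (2). Under your version of the converse, the claim that groups of type (I) in Theorem \ref{segundo} are $SP$-groups is left unproved; fortunately that claim is the easy verification indicated above.
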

	\begin{proof}
		
		We have that $G$ contains a normal abelian subgroup $A$ of index $p$. Let $H$ be a Hall $p'$-subgroup of $G$. Since $H\leq A$, then $H$ is normal in $G$ and is an abelian group.
		
		%Имеем $G$ содержит нормальную абелеву подгруппу $A$ индекса $p$. Пусть $H$ холлова $p'$-подгруппа группы $G$. Поскольку $H\leq A$, то $H$ нормальна в $G$ и является абелева группа. 
		
		Let $P\leq H$ be a Sylow $p$-subgroup of $G$. Assume that $P$ acts non-trivially on $H$. From Lemma \ref{Gor_5.2.3} and the fact that $H$ is an Abelian $p'$-group it follows that there is $x\in P$ such that $H=C_H(x)\times Y$, where $ Y>1$. In particular, $Ind(H,x)>1$ and $x\not\in A$. We have $N_G(Y)\geq A\la x\ra$. Since $N_G(Y)/A=G/A$, we have $Y\unlhd G$.
		
		%Пусть $P\leq H$ силовская $p$-подгруппа группы $G$. Допустим, что $P$ действует не тривиально на $H$. Из леммы \ref{Gor_5.2.3} и того, что $H$ абелева $p'$-группа следует, что найдется $x\in P$ такой, что $H=C_H(x)\times Y$, где $Y>1$. В частности, $Ind(H,x)>1$ и $x\not\in A$. Имеем $N_G(Y)\geq A\la x\ra$ Поскольку $N_G(Y)/A=G/A$, то $Y\unlhd G$.
		
		Let $y\in Y$. Since $Y$ is a normal subgroup of $G$ and $A\leq C_G(y)$, we have $Ind(G,y)=p$. Note that $Ind(G,x)_p=Ind(P,x)$.
		
		%Пусть $y\in Y$. Из того, $Y$ нормальная в $G$ подгруппа и $A\leq C_G(y)$ следует, что $Ind(G,y)=p$. Заметим, что $Ind(G,x)_p=Ind(P,x)$. 
		
		Assume that $P$ is not abelian. Assume that $Ind(P,x)=1$. Since $C_G(A)\geq A\la x\ra=P$, we have $A\leq Z(P)$. From the fact that $|P|/|A|=p$ and Lemma \ref{ab} it follows that $P$ is abelian; a contradiction. Therefore, $Ind(G,x)_p>1$. Thus $Ind(G,y)$ divides $Ind(G,x)$; a  contradiction. Therefore $P$ is abelian and Statement $1$ of the Lemma holds.
		
		%Допустим, что $P$ не абелева группа. Если $Ind(P,x)=1$, тогда  $A\leq Z(P)$. Из того, что $|P|/|A|=p$ и Леммы \ref{ab} следует, что $P$ абелева; противоречие. Следовательно $Ind(G,x)_p>1$. Таким образом $Ind(G,y)$ делит $Ind(G,x)$; противоречие. Следовательно $P$ абелева группа и выполнено утверждение 1 Леммы.
		
		Suppose that $P$ acts trivially on $H$. Therefore, for any $h\in H$ we have $Ind(G,h)_p=1$. It follows from Lemma \ref{Complement}, that $P$ is a direct factor of the group $G$. For any $x\in P$ we have $Ind(G,x)=Ind(P,x)$. Thus, if $P$ is not a group of rank $1$ then $P$ contains elements $x$ and $y$ such that $1<Ind(G,x)<Ind(G,y)$ and $Ind( G,x)|Ind(G,y)$; a contradiction. Therefore $rank(P)=1$.
		
		%Предположим, что $P$ действует тривиально на $H$. Тогда, для любого $h\in H$ имеем $Ind(G,h)_p=1$. Ил леммы \ref{Complement} следует, что $P$ является прямым сомножетелем группы $G$. Для любого $x\in P$ выполено $Ind(G,x)=Ind(P,x)$. Таким образом, если $P$ не является группой ранга $1$ тогда в $P$ найдутся элементы $x$ и $y$ такие, что $1<Ind(G,x)<Ind(G,y)$ и $Ind(G,x)|Ind(G,y)$.  
		
		Thus, if $G\in SP$, then one of the statements of the lemma holds.
		
		Let us prove that if one of the statements of the lemma is satisfied, then $G\in SP$.
		
		%Таким образом, если $G\in SP$, тогда выполняется одно из утверждений леммы. 
		
		%Докажем, что если одно из утверждлений лемму выполнено, то $G\in SP$. 
		
		Let us assume that the statement $1$ of the Lemma is satisfied. Then $N(G)=\{p\}$ and hence $G\in SP$.
		
		Let us assume that the statement $2$ of the Lemma is satisfied. Then $N(G)=\{p,k\}$, where $k$ is coprime with $p$. Therefore, $G\in SP$.
		
		%Допустим, что выполнено утверждение $1$ Леммы. Тогда $N(G)=\{p\}$ и следовательно $G\in SP$.
		
		%Допустим, что выполнено утверждение $2$ Леммы. Тогда $N(G)=\{p,k\}$, где $k$ число взаимно простое с $p$. Следовательно, $G\in SP$.
		
	\end{proof}
	
	\begin{rem}\label{example}
		Note that the class $F(I)$ contains a $CA$-group that is not an $SP$-group. For example, we can take a $p$-group $P=A\rtimes B$, where $A$ is an elementary abelian group of order $p^p$ and $B$ is a group of order $p$ acting permutatively on $A$. Obviously, the centralizer of any non-central element of $G$ is abelian and $N(G)=\{p,p^{p-1}\}$. If $p>2$, then $G\in CA\setminus SP$.
		This means that the class of $SP$-groups does not coincide with the classes of $CA$- and $CH$-groups.
		
		%Заметим, что класс $F(I)$ групп содержит $CA$-группу которая не является $SP$-группой. Например, можно взять $p$-группу $P=A\rtimes B$, где $A$ элементарная абелева группа порядка $p^p$ и $B$ группа порядка $p$. Очевидно, что централизатор любого не центрального элемента группы $G$ абелев и $N(G)=\{p,p^{p-1}\}$. Таким образом, если $p>2$, то $G\in CA\setminus SP$.
		%Значит класс $SP$-групп не совпадает с классами $CA$- и $CH$-групп. 
		
	\end{rem}
	
	\begin{lem}\label{dois}
		Groups from $F(II)$ are $SP$-groups.
		%Группы из $F(II)$ являются $SP$-группами.
	\end{lem}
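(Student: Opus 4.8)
The plan is to compute $N(G)$ explicitly and show it consists of exactly two coprime integers. Write $G/Z=(K/Z)\rtimes(L/Z)$ for the given Frobenius structure, with $K$ and $L$ abelian. Two standard facts about Frobenius groups will drive the argument: the kernel order $|K/Z|$ and the complement order $|L/Z|$ are coprime and both exceed $1$; and in $G/Z$ the centralizer of any nontrivial element of $K/Z$ is contained in $K/Z$, while the centralizer of any nontrivial element of a complement equals that complement (the complement acts fixed-point-freely). Since $G/Z$ is a genuine Frobenius group, both $K\setminus Z$ and $G\setminus K$ are nonempty, so both class sizes below actually occur.

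First I would treat the kernel. For $x\in K\setminus Z$ the image $xZ$ is a nontrivial element of $K/Z$, so $C_{G/Z}(xZ)\le K/Z$, and as $K/Z$ is abelian this gives $C_{G/Z}(xZ)=K/Z$. On the other hand $K$ abelian yields $K\le C_G(x)$. Lemma \ref{cenfak} gives $C_G(x)Z/Z\le C_{G/Z}(xZ)=K/Z$, and since $Z\le C_G(x)$ we have $C_G(x)Z=C_G(x)$, whence $|C_G(x)|\le|K|$. Combined with $K\le C_G(x)$ this forces $C_G(x)=K$, so $Ind(G,x)=|G|/|K|=|L/Z|$.

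Next I would handle the elements lying outside the kernel. For $y\in L\setminus Z$ the same bookkeeping, now with $C_{G/Z}(yZ)=L/Z$ and $L\le C_G(y)$, produces $C_G(y)=L$ and hence $Ind(G,y)=|G|/|L|=|K/Z|$. To cover every noncentral $y$ whose image lies in $(G/Z)\setminus(K/Z)$, I would use that in a Frobenius group the nonidentity elements outside the kernel are covered by the conjugates of the complement; thus $yZ$ is $G/Z$-conjugate to an element of $L/Z$, which lifts (using $Z\le L$) to $y$ being $G$-conjugate to an element of $L\setminus Z$. Conjugate elements have centralizers of equal order, so again $Ind(G,y)=|K/Z|$.

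Assembling these computations gives $N(G)=\{\,|K/Z|,\,|L/Z|\,\}$, two integers greater than $1$ that are coprime; hence neither divides the other and $N(G)$ is a primitive set, which by the Remark is precisely the condition $G\in SP$. The only delicate point is upgrading the inclusion of Lemma \ref{cenfak} to the equalities $C_G(x)=K$ and $C_G(y)=L$, i.e. lifting centralizer data from $G/Z$ to $G$; here this is forced purely by the abelianness of $K$ and $L$ together with the Frobenius centralizer structure, so no genuine obstacle arises.
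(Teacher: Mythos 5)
Your proof is correct and follows essentially the same route as the paper's: both compute the centralizers via the quotient map to $G/Z$ (the paper uses the containment $\overline{C_G(x)}\leq C_{\overline{G}}(\overline{x})$ together with the Frobenius centralizer structure and abelianness of $K$ and $L$ to get $C_G(x)=K$ and $C_G(y)=L$), invoke the covering of $G/Z$ by the kernel and conjugates of the complement, and conclude $N(G)=\{|K/Z|,|L/Z|\}$ is a pair of coprime integers, hence primitive. Your write-up merely makes explicit the lifting step that the paper leaves implicit.
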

	\begin{proof}
		We have that $G/Z$ is a Frobenius group with Frobenius kernel $K/Z$ and Frobenius
		complement $L/Z$, where $K$ and $L$ are abelian. 
		Let $\overline{\ }: G\rightarrow G/Z$ be a natural homomorphism. Take $x\in K$, such that $Ind(G,x)>1$. The fact that $\overline{C_G(x)}\leq C_{\overline{G}}(\overline{x})$ and $C_{\overline{G}}(\overline{x})=\overline{K}$ implies that $Ind(G,x)=|L/Z|$. Similarly, if $y\in L$ is such that $Ind(G,y)>1$, then $Ind(G,y)=|\overline{K}|$. Since any element of $G$ lies in the subgroup conjugate to one of the subgroups $L$ or $K$, then the set of  conjugacy class sizes of the group $G$ is $\{1,|\overline{K}|, |\overline{L}|\}$. The groups $\overline{K}$ and $\overline{L}$ have coprime order. Thus $G\in SP$.
		
		%Пусть $\overline{\ }: G\rightarrow G/Z$-- естественный гомоморфизм. Возьмем $x\in K$ такой, что $Ind(G,x)>1$. Из того, что $\overline{C_G(x)}\leq C_{\overline{G}}(\overline{x})$ и $C_{\overline{G}}(\overline{x})=\overline{K}$ следует, что $Ind(G,x)=|L/Z|$. Аналогично, если $y\in L$ такой, что $Ind(G,y)>1$, то $Ind(G,y)=|\overline{K}|$. Поскольку, любой элемент из $G$ лежит в подгруппе сопряженной с одной из подгрупп $L$ или $K$, то множество размеров классов сопряженности группы $G$ имеет вид $\{1,|\overline{K}|, |\overline{L}|\}$. Группы $\overline{K}$ и $\overline{L}$ имеют взаимнопростой порядок. Таким образом $G\in SP$.     
		
	\end{proof}
	\begin{rem}
		Note that groups satisfying statement $2$ of Lemma \ref{um} are $F(II)$ groups.
		%Заметим, что группы удовлетворяющие утверждению $2$ Леммы \ref{um} являются $F(II)$ группами.
	\end{rem}

	\begin{lem}
		$G\in F(III)$ if and only if $rank(K)=1$.% and $L$ is abelian.% or $Z(L)\leq Z$ and $L\in SP$.
		
		%$G\in F(III)$ тогда и только тогда, когда $P\in SP$, и $L$-- абелева или $Z(L)\leq Z$ и $L\in SP$.
	\end{lem}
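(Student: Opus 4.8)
The plan is to translate $G\in SP$ into a primitivity condition on the two $p$-local families of class sizes produced by the Frobenius kernel and complement, and to read off $rank(K)$ from it. Throughout I use the type III data: $\overline{G}=G/Z$ is Frobenius with kernel $\overline{K}$ and complement $\overline{L}$, $K=PZ$ with $P$ a normal Sylow $p$-subgroup, $Z(P)=Z\cap P$, and $L=HZ$ with $H$ an abelian $p'$-group, so that $L$ is abelian and $|L/Z|$ is coprime to $p$. The first step is the reduction $N(K)=N(P)$, whence $rank(K)=rank(P)$: since $Z\le Z(K)$, each element of $K$ has the form $pz$ with $p\in P$, $z\in Z$, and $(pz)^{K}=p^{P}$, $C_K(pz)=C_P(p)Z$; as $C_P(p)\cap Z=Z(P)$, the index $|K:C_K(pz)|$ collapses to $|P:C_P(p)|=|p^{P}|$.

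Next I would compute $N(G)$. For a noncentral $x\in K$ the Frobenius shape of $\overline{G}$ forces $C_{\overline{G}}(\overline{x})\le\overline{K}$, so Lemma \ref{factorKh}(v) gives $C_G(x)\le K$ and therefore $Ind(G,x)=|L/Z|\cdot Ind(K,x)=|L/Z|\cdot|x^{P}|$, a $p'$-number times a $p$-power. For a noncentral $y\in L$ the Frobenius shape gives $C_G(y)\le L$, and since $L$ is abelian $Ind(G,y)=|G/L|=|K/Z|=|P/Z(P)|$, a pure $p$-power. As every element of $G$ is conjugate into $K$ or into $L$, I obtain
$$N(G)=\{\,|L/Z|\cdot q:\ q\in N(P)\,\}\cup\{\,|P/Z(P)|\,\}.$$

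For the forward direction, the fixed factor $|L/Z|$ is prime to $p$ while each $q\in N(P)$ is a $p$-power, so divisibility among the numbers $|L/Z|\cdot q$ is governed solely by divisibility among the $q$. Since $G\in SP$, the set $N(G)$ is primitive, hence $N(P)$ is a primitive set of $p$-powers; such a set is a singleton, so $rank(P)=1$ and thus $rank(K)=1$.

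For the converse assume $rank(K)=rank(P)=1$, say $N(P)=\{p^{a}\}$, so that $N(G)=\{\,|L/Z|\,p^{a},\ |P/Z(P)|\,\}$. The decisive point is the strict inequality $p^{a}=|x^{P}|<|P/Z(P)|$, valid because $Z(P)\subsetneq\langle x\rangle Z(P)\le C_P(x)$ for every noncentral $x$; combined with $(|L/Z|,p)=1$ this makes the two listed numbers incomparable under divisibility (their $p$-parts $p^{a}$ and $|P/Z(P)|$ differ, and the extra factor $|L/Z|>1$ is prime to $p$), so $N(G)$ is primitive and $G\in SP$. I expect the main obstacle to be exactly this incomparability check, which genuinely needs both the strict inequality $|x^{P}|<|P/Z(P)|$ and the coprimality of $|L/Z|$ to $p$; one must also record the bookkeeping that $rank(K)=1$ means $K$ is nonabelian, which separates type III from the abelian-kernel type II and pins $G$ to the refined shape of Theorem \ref{segundo}(III) with $L$ abelian and $rank(P)=1$.
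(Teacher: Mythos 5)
Your proof is correct, and on the direction the paper actually proves it follows the same route: both arguments hinge on $Ind(G,x)=|L/Z|\cdot Ind(P,x)$ for noncentral $x\in K$ (via $C_{\overline{G}}(\overline{x})\leq\overline{K}$ and Lemma \ref{factorKh}(v)) together with the observation that the nontrivial indices in the $p$-group $P$ are $p$-powers, hence pairwise comparable under divisibility, so primitivity of $N(G)$ forces $rank(P)=1$. Where you go beyond the paper's text: (1) you make explicit the reduction $N(K)=N(P)$, whereas the paper states the lemma in terms of $rank(K)$ but silently concludes with $rank(P)$; (2) you record the complement's contribution $|K/Z|=|P/Z(P)|$ to $N(G)$, which the paper never computes; and (3), most substantially, you prove the converse implication --- that $rank(P)=1$ forces $G\in SP$ --- which the paper's proof omits entirely, even though the lemma is stated as an equivalence. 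Your incomparability check is exactly what is needed there: $|L/Z|\,p^a$ cannot divide the $p$-power $|P/Z(P)|$ because $|L/Z|>1$ is a $p'$-number, and $|P/Z(P)|$ cannot divide $|L/Z|\,p^a$ because of the strict inequality $p^a=|x^P|<|P/Z(P)|$, which holds since $Z(P)<\langle x\rangle Z(P)\leq C_P(x)$ for noncentral $x\in P$. So your write-up is not merely correct but closes a genuine gap in the paper's argument. (One cosmetic point of comparison: the paper's conclusion ``Thus $Ind(G,x)\,|\,Ind(G,y)$'' has the divisibility reversed relative to its hypothesis $Ind(P,x)>Ind(P,y)$ --- a typo your version avoids.)
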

	\begin{proof}
		We have that $G/Z$ is a Frobenius group with Frobenius kernel $K/Z$ and Frobenius
		complement $L/Z$, such that $K=PZ$, where $P$ is a Sylow $p$-subgroup of $G$ for some $p\in \pi(G)$, $P$ is $CH$-group, $Z(P)=Z\cap P$ and $L\simeq HZ$, where $H$ is an abelian $p'$-subgroup of $G$. In particular, $L$ is abelian. 
		
		Suppose that there are elements $x,y\in P$, such that $Ind(P,x)>Ind(P,y)>1$. Since $P$ is a normal Sylow $p$-subgroup of $G$ it follows that $Ind(G,x)_p=Ind(P,x)$. Similar to Lemma~\ref{dois}, it can be shown that $Ind(G,x)=Ind(P,x)|L/Z|$ and $Ind(G,y)=Ind(P,y)|L /Z|$. Thus $Ind(G,x)|Ind(G,y)$; a contradiction. Thus $rank(P)=1$.

		%Предположим, что найдутся элементы $x,y\in P$ такие, что $Ind(P,x)>Ind(P,y)>1$. Из того, что $P$ нормальная подгруппа в $G$ следует, что $Ind(G,x)_p=Ind(P,x)$. Аналогично как в Лемме \ref{dois} можно показать, что $Ind(G,x)=Ind(P,x)|L/Z|$ и $Ind(G,y)=Ind(P,y)|L/Z|$. Таким образом $Ind(G,x)|Ind(G,y)$; противоречие. Таким образом $rank(P)=1$.

		%Аналогично, можно показать, что в $L$ нет элементов $a,b$ таких, что $Ind(L,a)>Ind(L,b)$ и $Ind(G,b)>1$. Таким образом если $G\in F(III)$, то $G$ удовлетворяет условию леммы.
		
		%Покажем, что если $G$ удовлетворяет условию леммы, то $G\in SP$. Для любого $x\in P\setminus Z$ имеем $Ind(G,x)=Ind(P,x)|L/Z|$. Для любого $y\in L\setminus Z$ имеем $Ind(G,y)=Ind(L,y)|P/Z|$. Из того, что $p$ не делит $|L/Z|$ следует, что $Ind(G,x)$ и $Ind(G,y)$ не смежны в $\Gamma(G)$. Поскольку $G=P\cup L^G$, то $C\in SP$.  
	\end{proof}
	
	\begin{lem}
		If $G\in F(IV)$, then $G$ is not an $SP$-group.
		%Группы типа $F(IV)$ не являются $SP$-группами.
	\end{lem}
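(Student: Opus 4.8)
The plan is to pass to the quotient $\overline{G}=G/Z\simeq S_4$ and read off the possible conjugacy class sizes of $G$ from it. By Lemma~\ref{DHJ}(IV) we have $\overline{G}\simeq S_4$, and, writing $V$ for the preimage in $G$ of the normal Klein four subgroup $V_4\unlhd S_4$, the group $V$ is nonabelian. First I would record the standard facts that for every $x\in G$ one has $\la\overline{x}\ra\le\overline{C_G(x)}\le C_{\overline{G}}(\overline{x})$ (the left inclusion because $x\in C_G(x)$, the right one by Lemma~\ref{factorKh}(v)), and that since $Z\le C_G(x)$ this gives $Ind(G,x)=|\overline{G}|/|\overline{C_G(x)}|=24/|\overline{C_G(x)}|$. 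Applying this to an $x$ with $\overline{x}$ a $4$-cycle, where $C_{\overline{G}}(\overline{x})=\la\overline{x}\ra$ has order $4$, forces $\overline{C_G(x)}=\la\overline{x}\ra$ and hence $Ind(G,x)=6$; thus $6\in N(G)$ unconditionally. The goal is then to show $12\in N(G)$ as well, for then $6\mid 12$ shows $N(G)$ is not a primitive set and $G\notin SP$.

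The crucial point, and the only place where the hypothesis that $V$ is nonabelian enters, is to prove that every $x$ with $\overline{x}$ a transposition satisfies $Ind(G,x)=12$. I would first check $Z(V)=Z$: one has $Z\le Z(V)$, and since $V/Z\simeq V_4$ while $V$ is nonabelian, the possibility $|Z(V)/Z|=2$ is excluded by Lemma~\ref{ab} (it would make $V/Z(V)$ cyclic) and $|Z(V)/Z|=4$ would make $V$ abelian. Hence $V$ has nilpotency class $2$ with $V'\le Z$; choosing $v',v''\in V$ whose images generate $V/Z$, the commutator $c:=[v',v'']$ lies in $Z$ and is nontrivial, say with $\overline{v'}=(1\,3)(2\,4)$ and $\overline{v''}=(1\,4)(2\,3)$.

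Now take $\overline{x}=(1\,2)$. Conjugation by $x$ is an automorphism $\alpha$ of $V$ acting on $V/Z\simeq V_4$ exactly as $(1\,2)$ acts on $V_4$: it fixes the image of $v:=v'v''$ (which is $(1\,2)(3\,4)$) and interchanges the images of $v'$ and $v''$. Writing $\alpha(v')=v''z_1$ and $\alpha(v'')=v'z_2$ with $z_i\in Z$, the relation $\alpha^2=\mathrm{id}_V$ — valid because $x^2\in Z$ is central — yields $z_1z_2=1$. If $x$ centralized $v$, equivalently if $(1\,2)(3\,4)\in\overline{C_G(x)}$, then $\alpha(v)=v$; but a direct computation gives $\alpha(v'v'')=v''v'=v'v''c^{-1}$, forcing $c=1$, a contradiction. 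Therefore $\overline{C_G(x)}$ is a proper subgroup of $C_{\overline{G}}(\overline{x})=\la(1\,2),(3\,4)\ra$ that contains $(1\,2)$ but not $(1\,2)(3\,4)$, so $\overline{C_G(x)}=\la(1\,2)\ra$ and $Ind(G,x)=12$.

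Combining the two computations gives $\{6,12\}\subseteq N(G)$ with $6\mid 12$, so $N(G)$ is not primitive and $G$ is not an $SP$-group. The main obstacle is precisely the transposition computation of the previous paragraph: without the nonabelian hypothesis on $V$ the automorphism $\alpha$ could fix $v$, the centralizer would lift fully, and the class size would drop to $6$ (as in the abelian situation of Lemma~\ref{um}); it is the nonzero central commutator $c$ that blocks this and produces the size $12$. The remaining ingredients — the verification $Z(V)=Z$ and the elementary centralizer orders in $S_4$ — are routine.
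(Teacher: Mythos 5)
Your proof is correct. It follows the paper's overall plan — produce the class sizes $6$ (from a preimage of a $4$-cycle) and $12$ (from a preimage of a transposition), so that $6\mid 12$ contradicts primitivity of $N(G)$ — and your $4$-cycle computation is identical to the paper's; but the crucial transposition step is handled by a genuinely different argument. The paper takes $\overline{C}=C_{\overline{G}}(\overline{g})=\langle(12)\rangle\times\langle(12)(34)\rangle$, asserts that since $\overline{C}$ is a Klein four group its preimage $C$ is nonabelian, and deduces from this that $C_G(g)=Z\langle g\rangle$, so $Ind(G,g)=12$. Note, however, that $\overline{C}$ is a \emph{non-normal} Klein four subgroup of $S_4$, not conjugate to the normal one $V/Z$ to which the hypothesis of Lemma~\ref{DHJ}(IV) refers; transferring nonabelianness to $C$ is therefore not automatic (it is true in a $CH$-group, but by an argument the paper omits). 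You instead work inside the normal subgroup $V$ itself, exactly where the hypothesis lives: the nontrivial central commutator $c=[v',v'']$, together with the fact that conjugation by $x$ swaps $v'$ and $v''$ modulo $Z$ with $z_1z_2=1$ (forced by $x^2\in Z$), shows $\alpha(v)=vc^{\pm 1}\neq v$, hence $(12)(34)\notin\overline{C_G(x)}$, so $\overline{C_G(x)}=\langle(12)\rangle$ and $Ind(G,x)=12$. What your route buys is self-containedness and robustness: it uses only the hypothesis as literally stated plus elementary commutator calculus, whereas the paper's shorter version rests on an unproved (though correct) claim about a different subgroup. Two minor points: Lemma~\ref{ab} is stated only for $p$-groups, but the fact you actually need ($H/Z(H)$ cyclic forces $H$ abelian) holds for every group; and your verification $Z(V)=Z$ is dispensable, since $V'\le Z$ already follows from $V/Z$ being abelian and $c\neq 1$ from $V$ being nonabelian, which is all your computation uses.
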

	\begin{proof}
		We have $G/Z\simeq S_4$ and if $V/Z$ is the Klein four group in $G/Z$, then $V$ is nonabelian. 
		
		Let $\overline{G}=G/Z$, $h\in G$ be such that its image $\overline{h}\in \overline{G}$ is of order $4$. Note that $C_{\overline{G}}(\overline{h})=\la h\ra$. From the fact that $\overline{C_{G}(h)}\leq C_{\overline{G}}(\overline{h})$, it follows that $C_G(h)=Z\la h\ra$. Therefore, $Ind(G,h)=6$.
		
		%Положим $\overline{G}=G/Z$, $h\in G$ -- такой, что его образ $\overline{h}\in \overline{G}$ имеет порядок $4$. Заметим, что $C_{\overline{G}}(\overline{h})=\la h\ra$. Из того, что $\overline{C_{G}(h)}\leq C_{\overline{G}}(\overline{h})$ следует, что $C_G(h)=Z\la h\ra$. Следовательно, $Ind(G,h)=6$.
		
		Let $g\in G$ be a $2$-element such that $\overline{g}$ is an element of order $2$ and does not lie in any proper normal subgroup of $\overline{G}$. In the standard permutation representation of $S_4$, the element $g$ has the form $(1,2)$. Note that $\overline{C}=C_{\overline{G}}(\overline{g})=\la \overline{g}\ra\times\la \overline{z}\ra$, where $\overline{z}$ is the central element of some Sylow $2$-subgroup of the group $\overline{G}$. In the standard permutation representation of $S_4$, the element $z$ has the form $(1,2)(3,4)$. The group $\overline{C}$ is the Klein four group and therefore $C$ is nonabelian. Note that $Z<C_G(g)\leq C$. Therefore $C_G(g)=Z\la g\ra$. So $Ind(G,g)=12$. Thus, $Ind(G,h)$ divides $Ind(G,g)$; a contradiction.
		
		%Пусть $g\in G$-- $2$-элемент такой, что $\overline{g}$ элемент порядка $2$ и не лежит ни в какой собственной нормальной подгруппе группы $\overline{G}$. В стандартном подстановочном представлении элемент $g$ имеет вид $(1,2)$. Заметим, что $\overline{C}=C_{\overline{G}}(\overline{g})=\la \overline{g}\ra\times\la \overline{z}\ra$, где $\overline{z}$ центральный элемент некоторой силовской $2$-подгруппы группы $\overline{G}$. В стандартном подстановочном представлении группы $\overline{G}$ элемент $z$ имеет вид $(1,2)(3,4)$. Группа $\overline{C}$ является четверной группой клейна и следовательно ее прообраз $C$ в группу $G$ неабелев. Заметим, что $Z<C_G(g)\leq C$. Следовательно $C_G(g)=Z\la g\ra$. Значит $Ind(G,g)=12$. Таким образом, $Ind(G,h)$ делит $Ind(G,g)$; противоречие.     
		
	\end{proof}
	\begin{lem}
		$G\in F(V)$ if and only if $rank(P)=1$. 
	\end{lem}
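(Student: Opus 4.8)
The plan is to observe that the abelian direct factor $A$ is absorbed into the center and contributes nothing to the conjugacy class sizes, so that the arithmetic of $N(G)$ is controlled entirely by the $p$-group $P$; once this is established, the claim reduces to the elementary fact that a set of powers of a single prime is primitive exactly when it has at most one element.

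First I would unwind the definition of $F(V)$: by Lemma \ref{DHJ}(V) we have $G\simeq P\times A$, where $P$ is a nonabelian $CH$-group of order a power of a prime $p$ and $A$ is abelian. Since $A$ is an abelian direct factor, $A\leq Z(G)$, and for any $(x,a)\in P\times A$ the centralizer splits as $C_G((x,a))=C_P(x)\times A$. Cancelling $|A|$ then gives $Ind(G,(x,a))=Ind(P,x)$ for every element, so that $N(G)=N(P)$.

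Next, for the forward direction I would assume $G\in SP$ and use that $P$ is a $p$-group, so every member of $N(P)=N(G)$ is a power of $p$ strictly greater than $1$. If $rank(P)\geq 2$, then $N(P)$ contains two distinct such powers $p^a<p^b$; since $p^a$ divides $p^b$, the set $N(G)$ would fail to be primitive, contradicting $G\in SP$. As $P$ is nonabelian we have $rank(P)\geq 1$, whence $rank(P)=1$. For the converse, if $rank(P)=1$ then $N(G)=N(P)$ is the singleton $\{p^m\}$, which is trivially primitive, so $G\in SP$; and since a $p$-group of rank $1$ is automatically a $CH$-group (all noncentral elements sharing a common centralizer order), $G$ genuinely lies in $F(V)$.

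I do not expect a genuine obstacle here: the only step requiring attention is the centralizer computation in the direct product, and the reduction to primitivity of a set of prime powers is immediate. The real content of the lemma is just the translation of the primitivity requirement on $N(G)$ into the rank condition on the single $p$-group factor $P$.
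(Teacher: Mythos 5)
Your proposal is correct and follows exactly the paper's approach: the paper's entire proof is the one-line observation that $N(G)=N(P)$, which you establish in detail via the centralizer splitting $C_G((x,a))=C_P(x)\times A$ and then combine with the fact that a set of powers of a single prime is primitive only when it is a singleton. Your write-up simply makes explicit the steps the paper leaves to the reader, including the check that a rank-$1$ $p$-group is indeed a $CH$-group.
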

	\begin{proof}
		The statement of the lemma follows from the fact that $N(G)=N(P)$.
		%Из того, что $N(G)=N(P)$ следует утверждение леммы.
		
		%Очевидно
	\end{proof}      
	\begin{rem}
		Note that the groups from Statement $1$ of Lemma \ref{um} are $F(V)$-groups.
		%   Заметим, что группы из утверждения $1$ Леммы \ref{um} являются $F(V)$ группами.   
	\end{rem}
	
	\begin{lem}\label{seis}
		Groups from $F(VI)$ are $SP$-groups.
		%Группы из $F(VI)$ и $F(VII)$ являются $SP$-группами. 
	\end{lem}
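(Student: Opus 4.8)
The plan is to compute $N(G)$ explicitly in each case and check that it is a primitive set. Write $q=p^n>3$ and set $S=G/Z$, so that $S\simeq PSL(2,q)$ or $S\simeq PGL(2,q)$, while $G'\simeq SL(2,q)$. Since $PSL(2,q)$ is perfect and $PGL(2,q)'=PSL(2,q)$, we have $G'Z/Z=(G/Z)'$, hence $G=G'Z$ when $S\simeq PSL(2,q)$ and $[G:G'Z]=2$ when $S\simeq PGL(2,q)$. Moreover $G'\cap Z=Z(G')=Z(SL(2,q))$, of order $2$ for $q$ odd and order $1$ for $q$ even. The basic reduction is that for $x\in G'$ one has $|C_{G'Z}(x)|=|C_{G'}(x)|\,|Z|/|Z(G')|$ together with $|G'Z|=|G'|\,|Z|/|Z(G')|$, so that $Ind(G,x)$ is controlled by the class-size data of $SL(2,q)$.

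First I would treat the inner elements $x\in G'=SL(2,q)$, recalling that the noncentral classes of $SL(2,q)$ are the unipotent ones of size $(q^2-1)/2$ for $q$ odd (resp. $q^2-1$ for $q$ even), the split semisimple ones of size $q(q+1)$, and the non-split semisimple ones of size $q(q-1)$. When $S\simeq PSL(2,q)$ we have $G=G'Z$ and no outer automorphism is involved, so $Ind(G,x)=Ind(SL(2,q),x)$ and therefore $N(G)=\{(q^2-1)/2,\,q(q+1),\,q(q-1)\}$ for $q$ odd, and the analogous set with $q^2-1$ for $q$ even (where in fact $G=SL(2,q)\times Z$).

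Next I would treat the $PGL$ case, which is the heart of the argument. An element $x\in G\setminus G'Z$ induces on $G'\simeq SL(2,q)$ the diagonal outer automorphism, realised as conjugation by some $g\in GL(2,q)$ with $\det g$ a non-square; a short centralizer count gives $Ind(G,x)=|SL(2,q)|/|C_{SL(2,q)}(g)|$. Because every non-semisimple element of $GL(2,q)$ has square determinant, such a $g$ must be regular semisimple, so $C_{SL(2,q)}(g)$ is a maximal torus of order $q-1$ or $q+1$, whence $Ind(G,x)\in\{q(q+1),\,q(q-1)\}$. At the same time this outer automorphism fuses the two unipotent classes of $SL(2,q)$, so the inner unipotent class size in $G$ doubles to $q^2-1$, while for inner semisimple elements an outer element does centralize and the size is unchanged. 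Collecting everything gives $N(G)=\{q^2-1,\,q(q+1),\,q(q-1)\}$.

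Finally I would verify primitivity by elementary divisibility: for instance $(q^2-1)/2=(q-1)(q+1)/2$ divides $q(q+1)$ only if $q-1\mid 2$, and $q(q-1)\mid q(q+1)$ only if $q-1\mid 2$, all of which fail for $q>3$; the same checks dispose of the remaining pairs and of the even case. The main obstacle is precisely the $PGL$ case: one must show that the central extension with $G'\simeq SL(2,q)$ does \emph{not} produce the ``half-size'' classes $q(q\pm1)/2$ that occur in $PGL(2,q)$ itself. Those smaller classes are exactly what makes $PGL(2,q)$ fail to be an $SP$-group, since $q(q+1)/2\mid q(q+1)$; the crucial point is that passing to $G$ ``unfolds'' the scalar identifications responsible for them, so that the centralizer of each outer element is a full torus rather than its normalizer. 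Establishing this fusion/centralizer behaviour carefully is where the real work lies.
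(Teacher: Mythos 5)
Your proposal is correct in substance and arrives at exactly the same endpoint as the paper: $N(G)=\{(q^2-1)/2,\,q(q+1),\,q(q-1)\}$ (the class sizes of $SL_2(q)$) in the $PSL$ case and $N(G)=\{q^2-1,\,q(q+1),\,q(q-1)\}$ (those of $GL_2(q)$) in the $PGL$ case, followed by the divisibility check. The difference is in how the center is stripped off. The paper picks a subgroup $X\leq G$ of minimal order with $XZ/Z=G/Z$, observes $X\unlhd G$ and $N(G)=N(X)$, and then asserts the structural description $X=S\rtimes L$ with $S\simeq SL_2(q)$ and $L$ abelian acting by diagonal automorphisms, from which $N(X)=N(SL_2(q))$ or $N(GL_2(q))$ is read off. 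You instead work in $G$ itself via the decomposition $G=G'Z$ (resp.\ $[G:G'Z]=2$), using $G'\cap Z=Z(G')$, and compute centralizers directly: your identities $|C_G(x)|=|C_{SL_2(q)}(g)|\,|Z|$ for outer $x$ and the fusion of the two unipotent $SL_2(q)$-classes are correct, and they make explicit the one point the paper leaves entirely implicit, namely why the half-size classes $q(q\pm1)/2$ of $PGL_2(q)$ -- which would destroy primitivity -- do not lift to $G$. What the paper's route buys is brevity and uniformity of the two cases; what yours buys is a transparent verification at the actual danger point. Note that both arguments share the same unproved identification: that the outer action on $G'\simeq SL_2(q)$ is diagonal rather than of field or field-diagonal type when $n$ is even (you assert it, the paper asserts the corresponding property of $L$). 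This follows because $C_G(G')=Z$, so $G/Z$ embeds in $\Aut(SL_2(q))$ over the inner automorphisms, and among the three index-two extensions of $PSL_2(q)$ inside $P\Gamma L_2(q)$ only the standard one is isomorphic to $PGL_2(q)$; spelling that out would close the gap you flag at the end of your proposal.
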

	\begin{proof}
		We have $G/Z\simeq PSL(2,p^n)$ or $PGL(2,p^n)$ and $G'\simeq SL(2,p^n)$ where $p$ is a prime and $p^n>3$.
		Put $q=p^n$. It's not difficult to check that $N(SL_2(q))=\{(q^2-1)/2, q(q-1), q(q+1)\}$ and $N(GL_2(q))=\{q(q-1), q^2-1, q(q+1)\}$. 
		
		Let $X\leq G$ be a subgroup of minimal order such that $XZ/Z=G/Z$, $S\leq X$ be such that $S\simeq SL_2(q)$. Since $G=XZ$, we see that $X\unlhd G$ and $N(G)=N(X)$. We have $X=S\rtimes L$, where $L$ is an abelian group acting on $S$ as a group of diagonal automorphisms or $L$ is the trivial group. Thus $N(X)=N(SL_2(q))$ or $GL_2(q)$.
		
		%Не сложно проверить, что $N(SL_2(q))=\{(q^2-1)/2, q(q-1), q(q+1)\}, N(GL_2(q))=\{q(q-1), q^2-1, q(q+1)$. Пусть $X\leq G$ подгруппа минимального порядка такая, что $XZ/Z=G/Z$, $S\leq X$  такая, что $S\simeq SL_2(q)$. Поскольку $G=XZ$, следует, что $X\unlhd G$ и $N(G)=N(X)$. Имеем $X=S\rtimes L$, где $L$ абелева группа действующая на $S$ как группа диагональных автоморфизмов или тривиально. Таким образом $N(X)=N(SL_2(q))$ или $GL_2(q)$.     
	\end{proof}       
	\begin{lem}\label{sete}
		Groups from $F(VII)$ are $SP$-groups.
		%Группы из $F(VI)$ и $F(VII)$ являются $SP$-группами. 
	\end{lem}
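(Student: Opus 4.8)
The plan is to argue exactly as in Lemma~\ref{seis}. First I would choose a subgroup $X\le G$ of minimal order with $XZ/Z=G/Z$. As there, $G=XZ$ with $Z$ central forces $X\unlhd G$, and for every $x\in X$ one has $C_G(x)=C_X(x)Z$, whence $Ind(G,x)=Ind(X,x)$ and $N(G)=N(X)$; so it suffices to show $X\in SP$. Since $G'=(XZ)'=X'$ equals the Schur cover $\hat A_6=6\cdot A_6$ of $PSL(2,9)\cong A_6$, and $\hat A_6$ is perfect, minimality gives $X=\hat A_6$ when $G/Z\cong PSL(2,9)$, and $X=\hat A_6.2$ (a lift of the outer automorphism of $A_6$ realizing $PGL(2,9)\cong A_6.2_2$) when $G/Z\cong PGL(2,9)$. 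Thus the lemma reduces to computing $N(\hat A_6)$ and $N(\hat A_6.2)$ and checking that each is primitive.

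Next I would compute the class sizes of $\hat A_6$ over the classes of $A_6$. Here $Z(\hat A_6)\cong C_6$, and by Lemma~\ref{factorKh}(i) each class $x^{A_6}$ lifts to classes of size $|x^{A_6}|\cdot d$, where $d$ is the order of the image of the commutator homomorphism $C_{A_6}(x)\to Z(\hat A_6)\cong C_6$, so $d\mid 6$. The centralizer orders in $A_6$ are $8,9,9,4,5,5$ for the classes $2A,3A,3B,4A,5A,5B$. Coprimality then controls the $2'$- and $3'$-parts of $d$: the involution class has $d=2$ (its lift to the double cover $SL_2(9)$ has order $4$), giving size $90$; the $4$-class and the two $5$-classes have $d=1$, giving $90,72,72$; and the two $3$-classes have $d\in\{1,3\}$, giving a common value $40$ or $120$. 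Hence $N(\hat A_6)=\{72,90\}$ together with $40$ or $120$.

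The decisive observation is that among $\{40,72,90,120\}$ the only divisibility relation is $40\mid 120$, so the sole way $N(\hat A_6)$ could fail to be primitive would be for one $3$-class to contribute $40$ and the other $120$. This cannot occur: the outer automorphism of $A_6$ interchanging $3A$ and $3B$ lifts to the universal central extension $\hat A_6$, forcing these two classes to have equal size there. Therefore $N(\hat A_6)$ is either $\{40,72,90\}$ or $\{72,90,120\}$, both primitive, and $\hat A_6\in SP$.

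For $X=\hat A_6.2$ I would repeat the computation inside the larger group: some pairs of $\hat A_6$-classes may fuse (their sizes in $X$ changing accordingly), and the outer coset $X\setminus\hat A_6$ contributes further classes whose sizes are governed by $PGL(2,9)\cong A_6.2_2$ together with the central $C_6$. I would then verify that the resulting set is still primitive. This final bookkeeping for the degree-two extension is the main obstacle: one must check that neither the fused inner classes nor the outer classes introduce a new divisibility against the surviving values $72,90$ and $40$ or $120$. It is a finite verification, most safely confirmed directly from the character tables of $6\cdot A_6$ and $6\cdot A_6.2_2$ in the ATLAS, and it completes the proof that every group in $F(VII)$ is an $SP$-group.
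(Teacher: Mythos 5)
Your reduction $N(G)=N(X)$ and your treatment of the $PSL(2,9)$ case are correct, and in fact more self-contained than the paper's own proof, which at this point simply invokes GAP to assert $N(X)=N(G')=\{72,90,120\}$. Your commutator-map computation over the classes of $A_6$, combined with the observation that the outer automorphism interchanging the two classes of $3$-elements lifts to the universal central extension and hence forces the fibres over $3A$ and $3B$ to contribute equal class sizes, is a valid replacement for that machine computation (the common value is in fact $120$: the image of such a class in $3\cdot A_6=6\cdot A_6/C_2$ is a single class of size $120$, which must divide the size upstairs; but, as you note, primitivity holds either way).

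The genuine gap is the $PGL(2,9)$ case, which you explicitly leave as ``the main obstacle'' and never carry out; as written this is a plan, not a proof, and moreover the plan cannot be executed as described, because the object you propose to look up is misidentified. In this case $Z(G')=G'\cap Z\le Z(G)$, so an element $t\in G$ lying over an outer element of $G/Z$ would induce an automorphism of $G'\cong 6\cdot A_6$ that induces the $PGL$-type outer automorphism on $A_6$ while centralizing the centre $C_6$. No such automorphism exists: automorphisms of $A_6$ lift uniquely to $6\cdot A_6$, and the $PGL$-type one inverts the order-$3$ part of the centre --- equivalently, the Schur multiplier of $PGL(2,9)$ is only $C_2$, the exceptional $C_3$ surviving instead into $M_{10}=A_6.2_3$. (One way to see this: $PGL(2,9)$ does \emph{not} fuse the two classes of order-$5$ elements of $PSL(2,9)$, so a $PGL$-type automorphism of $3\cdot A_6$ fixing the centre would fix each degree-$3$ character and would therefore extend the Valentiner group $A_6\le PGL_3(\mathbb{C})$ to a finite $A_6.2\le PGL_3(\mathbb{C})$, contradicting Blichfeldt's classification of the finite primitive subgroups of $PGL_3(\mathbb{C})$.) Consequently the $PGL(2,9)$ branch of $F(VII)$ is vacuous; what your argument needs at this point is a proof of that vacuity (or some other disposal of the branch), not a character-table check: there is no group $6\cdot A_6.2_2$ with the $C_6$ central whose table you could consult, and a group of shape $6\cdot A_6.2$ in which the outer half inverts the centre is not the $X$ of your setup, so its class sizes are irrelevant to the lemma. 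Until this branch is settled, the proposal does not prove the statement.
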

	\begin{proof}
		We have $G/Z\simeq PSL(2,9)$ or $PGL(2,9)$ and $G'$ is isomorphic to the Schur cover of $PSL(2,9)$.
		Let $X\leq G$ be a subgroup of minimal order such that $XZ/Z=G/Z$. Using \cite{GAP} it's easy to get that $N(X)=N(G')=\{72,90,120\}$. Similarly as in Lemma \ref{seis} we obtain that $N(G)=N(S)$ and therefore $G\in SP$.
		
	\end{proof}
	
	\section{Acknowledgements}
	The work was supported by Russian Science Foundation (project № 24-41-10004)

\end{document}